\documentclass[10pt]{amsart}
\usepackage{amsmath,amssymb,amsfonts,eucal,graphicx,latexsym,url}
\usepackage[left=2cm,right=2cm,top=2cm,bottom=2cm]{geometry}

\newtheorem{theorem}{Theorem}
\newtheorem{thm}[theorem]{Theorem}

\newtheorem{algorithm}[theorem]{Algorithm}

\newtheorem{conjecture}[theorem]{Conjecture}
\newtheorem{corollary}[theorem]{Corollary}

\newtheorem{definition}[theorem]{Definition}
\newtheorem{example}[theorem]{Example}
\newtheorem{exception}[theorem]{Exception}

\newtheorem{lemma}[theorem]{Lemma}

\newtheorem{remark}[theorem]{Remark}

\newtheorem{alg}[theorem]{Algorithm}

\let\Im\relax
\DeclareMathOperator{\Im}{Im}
\let\Re\relax
\DeclareMathOperator{\Re}{Re}

\newcommand{\h}{\mathbb{H}}
\newcommand{\N}{\mathbb{N}}
\newcommand{\R}{\mathbb{R}}
\newcommand{\Z}{\mathbb{Z}}
\newcommand{\eps}{\varepsilon}
\DeclareMathOperator{\SL}{SL}
\DeclareMathOperator{\PSL}{PSL}
\DeclareMathOperator{\vol}{Vol}
\DeclareMathOperator{\res}{Res}
\newcommand{\F}{\mathcal{F}}
\newcommand{\A}{\mathcal{A}}
\newcommand{\M}{\mathcal{M}}

\begin{document}
\title[Distribution of eigenvalues]{On the distribution of eigenvalues of Maass
  forms on certain moonshine groups}
\author[J.~Jorgenson]{Jay Jorgenson}
\address{Department of Mathematics, The City College of New York, Convent
  Avenue at 138th Street, New York, NY 10031 USA, e-mail:
  jjorgenson@mindspring.com}
\author[L.~Smajlovi\'c]{Lejla Smajlovi\'c}
\address{Department of Mathematics, University of Sarajevo, Zmaja od Bosne 35,
  71\,000 Sarajevo, Bosnia and Herzegovina, e-mail: lejlas@pmf.unsa.ba}
\author[H.~Then]{Holger Then}
\address{Department of Mathematics, University of Bristol, University Walk,
  Bristol, BS8 1TW, United Kingdom, e-mail: holger.then@bristol.ac.uk}

\begin{abstract}
  In this paper we study, both analytically and numerically, questions
  involving the distribution of eigenvalues of Maass forms on the moonshine
  groups $\Gamma_0(N)^+$, where $N>1$ is a square-free integer.
  After we prove that $\Gamma_0(N)^+$ has one cusp, we compute
  the constant term of the associated non-holomorphic Eisenstein series.
  We then derive an ``average'' Weyl's law for the distribution of eigenvalues
  of Maass forms, from which we prove the ``classical'' Weyl's law as a
  special case.
  The groups corresponding to $N=5$ and $N=6$ have the same signature;
  however, our analysis shows that, asymptotically, there are infinitely more
  cusp forms for $\Gamma_0(5)^+$ than for $\Gamma_0(6)^+$.
  We view this result as being consistent with the Phillips-Sarnak philosophy
  since we have shown, unconditionally, the existence of two groups which have
  different Weyl's laws.
  In addition, we employ Hejhal's algorithm, together with recently developed
  refinements from \cite{The12}, and numerically determine the first $3557$ of
  $\Gamma_0(5)^+$ and the first $12474$ eigenvalues of $\Gamma_0(6)^+$.
  With this information, we empirically verify some conjectured distributional
  properties of the eigenvalues.
\end{abstract}

\thanks{J.~J.\ acknowledges grant support from NSF and PSC-CUNY grants, and
  H.~T.\ acknowledges support from EPSRC grant EP/H005188/1.}
\date{10 September 2012}

\maketitle

\section{Introduction}

Let $\{p_{i}\}$, with $i=1,\ldots,r$, be a set of distinct primes, so then
$N=p_1\cdots p_r$ is a square-free, non-negative integer.
The subset of $\SL(2,\R)$, defined by
\begin{align*}
  \Gamma_0(N)^+:=\left\{ e^{-1/2}\begin{pmatrix}a&b\\c&d\end{pmatrix}\in
    \SL(2,\R): \quad ad-bc=e, \quad a,b,c,d,e\in\Z, \quad e\mid N,\ e\mid a,
    \ e\mid d,\ N\mid c \right\}
\end{align*}
is an arithmetic subgroup of $\SL(2,\R)$.
The groups $\Gamma_0(N)^+$ were first considered by Helling \cite{Hel66}
where it was proved that if a subgroup $G\subseteq\SL(2,\R)$
is commensurable with $\SL(2,\Z)$, then there exists a square-free,
non-negative integer $N$ such that $G$ is a subgroup of $\Gamma_0(N)^+$.
We also refer to page 27 of \cite{Sh71} where the groups $\Gamma_0(N)^+$
are cited as examples of groups which are commensurable with $\SL(2,\Z)$
but non necessarily conjugate to a subgroup of $\SL(2,\Z)$.

Following the discussion in \cite{CMS04,CG97,Cum04,Cum10,Ga06b},
we employ the term ``moonshine group'' when discussing $\Gamma_0(N)^+$.
The genus zero moonshine subgroups of $\SL(2,\R)$ arise in
the ``monstrous moonshine'' conjectures of Conway and Norton, which were
later proved in the celebrated work of Borcherds.
Gannon's book \cite{Ga06b} provides an excellent discussion of the
mathematics and mathematical history of monstrous moonshine.
In particular, we refer to Conjecture 7.1.1 where the Conway-Norton conjecture
is stated, which in its original form referred to certain genus zero subgroups
{\em of Moonshine-type}.
After the work of Borcherds, the authors in \cite{CMS04} described solely in
group-theoretic terms the $171$ genus zero subgroups that appear in
mathematics of ``monstrous moonshine''.
Amongst this list are those groups of the form $\Gamma_0(N)^+$ which have
genus zero.

Our interest in the groups $\Gamma_0(N)^+$ stems from the work in \cite{JS12}.
In that article, the groups $\Gamma_0(5)^+$ and $\Gamma_0(6)^+$ were examples
of arithmetically defined topologically equivalent groups which have distinct
spectral properties.
More specifically, in \cite{JS12} the authors defined an invariant associated
to any non-compact, finite volume hyperbolic Riemann surface, where the
invariant is equal to the larger of two quantities: one coming from the
length spectrum and another associated to the determinant of the scattering
matrix.
The groups $\Gamma_0(5)^+$ and $\Gamma_0(6)^+$ have the same signature and are
arithmetically defined, yet have different values of the invariant defined in
\cite{JS12}.
As a result, the main theorem of \cite{JS12} showed that, in somewhat vague
terms, the derivative of the Selberg zeta function of one surface has more
zeros than the derivative of the Selberg zeta function of the other.
Since the spectrum of a surface is measured by the zeros of the Selberg zeta
function, the main result of \cite{JS12} can be interpreted as saying that
surfaces $\Gamma_0(5)^+$ and $\Gamma_0(6)^+$ are quite different from the
point of view of the asymptotics of spectral analysis.

In somewhat vague terms, the purpose of the present article is to investigate
the spectral properties of the Riemann surfaces associated to the groups
$\Gamma_0(N)^+$ for square-free $N$ in order to make precise the observations
made in \cite{JS12}.
In doing so, we employ the ideas from \cite{The12} which build on Hejhal's
algorithm for numerically estimating eigenvalues of the Laplacian on finite
volume, hyperbolic Riemann surfaces.
With this said, we now can describe the main results.

Let $\overline{\Gamma_0(N)^+}=\Gamma_0(N)^+/\pm I$, where $I$ is the identity
matrix and let $X_N:=\overline{\Gamma_0(N)^+}\backslash\h$ be the
corresponding two dimensional surface.
Since $\Gamma_0(N)\subseteq\Gamma_0(N)^+$, where $\Gamma_0(N)$ denotes the
classical congruence subgroup of $\SL(2,\Z)$, the surface $X_N$ has finite
volume.
As stated, we will show that for any square-free $N$, the surface $X_{N}$
has exactly one cusp; hence the signature of $\Gamma_0(N)^+$ is
$(g;m_1,\ldots,m_l;1)$ where $g$ denotes the genus of the group and $l$ is the
number of inequivalent elliptic elements of $\Gamma_0(N)^+$ with $m_i$,
$i=1,\ldots,l$ denoting the order of the corresponding elliptic element.

Maass forms on $\Gamma_0(N)^+$ are real analytic, square integrable,
eigenfunctions of the Laplacian on the surface $X_N$.
Maass forms which vanish in the cusp are called Maass cusp forms.
The hyperbolic Laplacian $-\Delta$ on $X_N$ has a discrete and continuous
spectrum; see \cite{Iw02} or \cite{Hej83}.
The discrete spectrum is denoted by the set $\{\lambda_n\}_{n\ge0}$, counted
with multiplicities; here, we have that
$0=\lambda_0<\lambda_1\le\ldots\le\lambda_{n_N-1}<1/4
\le\lambda_{n_N}\le\ldots$ and $\lambda_n\to\infty$ as $n\to\infty$.
Let $m_{1/4, N}\geq0$ denote the multiplicity of $\lambda=1/4$ as (eventual)
eigenvalue of $-\Delta$.
Maass cusp forms span the positive discrete part of the spectrum.

Let $\{r_n\}$ denote the set of all positive real numbers satisfying the
equation $1/4+r_n^2=\lambda_n$.
For $T>0$, the function $\mathcal{N}_N(T):=\mathcal{N}_N[0< r_n\le T]$
counts the number of $r_n$ such that $0< r_n\le T$, or, equivalently,
the number of eigenvalues of Maass cusp forms which lie in the interval
$(1/4, T^2+1/4]$.

For any $T>0$ and square-free $N$, which we write as $N=p_1\cdots p_r$,
define $\alpha_N(j,T):=T\log p_j-\lfloor\frac{T\log p_j}{\pi} \rfloor\pi$
where $\lfloor x\rfloor$ denotes the greatest integer less than or equal to
$x$.

The main analytical result of the paper is the following theorem:

\begin{thm}[Average Weyl's law for $\Gamma_0(N)^+$]\label{AverageWeylLaw}
  Let $(g;m_1,\ldots,m_l;1)$ be the signature of the group $\Gamma_0(N)^+$
  and let $n_N\ge1$ denote the number of small eigenvalues of the Laplacian
  $-\Delta$ on $X_N$.
  Then
  \begin{align*}
    \mathcal{N}_N(T)=\M_N(T)+S_N(T)
  \end{align*}
  where
  \begin{multline*}
    \M_N(T)=\frac{\vol(X_N)}{4\pi}T^2-\frac{2T\log T}{\pi}
    +\frac{T}{\pi}(2+\log(\pi/2N))+\sum_{i=1}^{l}\frac{1}{4m_i}
    \sum_{j=1}^{m_i-1}\frac{1}{\sin^2(\pi j/m_i)}-\frac{\vol(X_N)}{48\pi}
    -m_{1/4, N} \\
    -\frac{3}{4}-\frac{n_N}{2}+\frac{1}{2\pi}\sum_{j=1}^r \alpha_N(j,T)
    -\frac{1}{\pi}\sum_{j=1}^r\arctan\left(\left(
    \frac{\sqrt{p_j}-1}{\sqrt{p_j}+1}
    \right)^{(-1)^{\lfloor\frac{T\log p_j}{\pi}\rfloor}}
    \tan\left(\frac{\alpha_N(j,T)}{2}\right)\right)+G_N(T),
  \end{multline*}
  with
  \begin{align}\label{G_N bound}
    \vert G_N(T)\vert\le\frac{1}{2\pi}\left(
    \frac{\vol(X_N)(2\pi+1)}{2\pi^2\exp(2\pi)}
    +\sum_{i=1}^{l}\frac{m_i}{2e\pi}\sum_{j=1}^{m_i-1}
    \frac{1}{\sin(\pi j/m_i)}+\frac{5051}{900}\right)\cdot\frac{1}{T},
  \end{align}
  for all $T>1$ and
  \begin{align*}
    \int\limits_0^T S_N(t)dt=O\left(\frac{T}{\log^2T}\right)
    \ \text{ as }\ T\to\infty.
  \end{align*}
\end{thm}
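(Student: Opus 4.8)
The proof will run through the Selberg trace formula for $\Gamma_0(N)^+$, which at this stage we can write down completely: the spectral side is controlled by $\{r_n\}$ together with the single continuous-spectrum term $-\frac{1}{4\pi}\int_{-\infty}^{\infty}h(r)\,\frac{\varphi_N'}{\varphi_N}\!\left(\tfrac12+ir\right)dr$ coming from the one cusp, where $\varphi_N$ is the constant term of the Eisenstein series computed above, while the geometric side is built from the signature $(g;m_1,\dots,m_l;1)$, the parabolic contribution of the cusp, and the length spectrum. The plan is to feed the trace formula a one-parameter family of admissible test functions $h_T$ whose spectral transform approximates the characteristic function of $[-T,T]$ (smoothed just enough to keep $h_T$ admissible, in the spirit of the derivations of the generalized Weyl law in \cite{Hej83,Iw02}), solve the resulting identity for $\mathcal N_N(T)$, and then evaluate every term on the right-hand side to within an error of order $1/T$. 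This single computation produces the explicit expression for $\M_N(T)$, establishes \eqref{G_N bound} with $G_N$ the aggregate of all the $O(1/T)$ remainders, and isolates the genuinely oscillatory leftover, which we then call $S_N(T)$.

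Term by term the bookkeeping runs as follows. The identity contribution $\frac{\vol(X_N)}{4\pi}\int h_T(r)\,r\tanh(\pi r)\,dr$ produces the leading $\frac{\vol(X_N)}{4\pi}T^2$, the constant $-\frac{\vol(X_N)}{48\pi}$, and a tail bounded by $\frac{\vol(X_N)}{\pi}\int_T^{\infty}\frac{r}{e^{2\pi r}+1}\,dr$, which for $T>1$ gives the first summand of \eqref{G_N bound}; the elliptic conjugacy classes contribute the closed-form constant $\sum_i\frac{1}{4m_i}\sum_{j}\sin^{-2}(\pi j/m_i)$ plus a tail of the shape $\sum_i\frac{m_i}{2e\pi}\sum_j\sin^{-1}(\pi j/m_i)\cdot T^{-1}$; the parabolic term of the cusp together with the Gamma factors of $\varphi_N$ -- including the ones implicit in the functional equations of the Riemann zeta functions that occur in $\varphi_N$ -- yield, via Stirling's formula, the term $-\frac{2T\log T}{\pi}$, the linear term $\frac{T}{\pi}\bigl(2+\log(\pi/2N)\bigr)$ and constants, leaving the numerical remainder that produces the $5051/900$ in \eqref{G_N bound}; and the correction constants $-\tfrac34$, $-m_{1/4,N}$ and $-\tfrac{n_N}{2}$ come from the careful treatment of the spectrum near $r=0$, that is, of the $n_N$ eigenvalues in $[0,1/4)$, the $m_{1/4,N}$ eigenvalues at $1/4$, and the vanishing of $\varphi_N(s)$ at $s=\tfrac12$ forced by the pole of the factor $\zeta(2s)$.

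The remaining $T$-dependent piece of $\M_N(T)$ comes from the finite product over the primes $p_j\mid N$ in $\varphi_N(s)$. Computing the argument of the factor $\bigl(p_j^{\,1/2+iT}-1\bigr)\bigl(p_j^{\,1/2+iT}+1\bigr)^{-1}$ and inserting the definition of $\alpha_N(j,T)$, a direct trigonometric manipulation turns the corresponding part of the winding number $-\frac{1}{4\pi}\int_{-T}^{T}\frac{\varphi_N'}{\varphi_N}(\tfrac12+it)\,dt$ into exactly $\frac{1}{2\pi}\sum_j\alpha_N(j,T)-\frac{1}{\pi}\sum_j\arctan(\cdots)$; since these are bounded but have nonzero mean they belong in $\M_N(T)$, not in $S_N(T)$. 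What survives is the oscillatory part: the length-spectrum contribution, which formally reads $\frac{1}{2\pi}\sum_{\{\gamma_0\}}\sum_{k\ge1}\frac{\sin\bigl(Tk\ell(\gamma_0)\bigr)}{k\sinh(k\ell(\gamma_0)/2)}$, together with the contribution of the nontrivial zeros of the zeta functions hidden in $\varphi_N$; this, by definition, is $S_N(T)$, and up to the terms already placed in $\M_N(T)$ one has $S_N(T)=\tfrac1\pi\arg Z_{\Gamma_0(N)^+}(\tfrac12+iT)$ plus an explicitly bounded zeta-phase correction, where $Z_{\Gamma_0(N)^+}$ is the Selberg zeta function.

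The main obstacle is the last assertion, $\int_0^T S_N(t)\,dt=O(T/\log^2 T)$. I would prove it by integrating first, working throughout with $\int_0^T\mathcal N_N(t)\,dt=\sum_n(T-r_n)_+$: the corresponding test function is a tent, whose geometric side has a tamer expansion and for which the smoothing needed to make the trace formula applicable is milder. The integral of $S_N$ is then analyzed by writing $\int_0^T\tfrac1\pi\arg Z_{\Gamma_0(N)^+}(\tfrac12+it)\,dt=\Im\int_0^T\tfrac1\pi\log Z_{\Gamma_0(N)^+}(\tfrac12+it)\,dt$, shifting the line of integration past the critical line, using the functional equation of $Z_{\Gamma_0(N)^+}$ -- which carries the explicit factor $\varphi_N$, hence the Riemann zeta function -- together with a Littlewood-type lemma, and then invoking the classical de la Vall\'ee Poussin zero-free region $\Re\rho<1-c/\log|\Im\rho|$ for $\zeta$, together with the bound $\ll T\log T$ for the number of zeros of $\zeta$ up to height $T$: this is what produces the denominator $\log^2 T$. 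I expect two delicate points: (i) keeping the Stirling and tail estimates of the second paragraph explicit and uniform in $N$, so that the constants in \eqref{G_N bound} come out exactly as stated; and (ii) the admissibility issue -- the indicator (respectively the tent) is not a Selberg-admissible test function, so either one smooths it on a scale $\delta$ and bounds the discrepancy from $\mathcal N_N(T)$, which is $O(\delta T)$ and must be balanced against the blow-up of the truncated geodesic sum (itself controlled via the Prime Geodesic Theorem), or one argues directly with $Z_{\Gamma_0(N)^+}$ and its logarithmic derivative from the outset.
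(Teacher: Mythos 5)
Your strategy -- the Selberg trace formula applied to a smoothed indicator test function, followed by a Littlewood-type argument for the averaged error -- is genuinely different from the paper's route, which integrates $Z_{X_N}'/Z_{X_N}$ around a rectangle, uses the functional equation of the Selberg zeta function together with the explicit scattering determinant $\varphi_N(s)=\frac{s}{s-1}\frac{\xi(2s-1)}{\xi(2s)}D_N(s)$, and quotes Hejhal's Theorem 2.29 for the off-critical-line contribution. Your bookkeeping in the second and third paragraphs is broadly plausible, but one specific claim is wrong: $\varphi_N$ does not vanish at $s=1/2$. With a single cusp one has $\varphi_N(s)\varphi_N(1-s)=1$, so $\varphi_N(1/2)=\pm1$; here the factor $(2s-1)$ inside $\xi(2s)$ cancels the pole of $\zeta(2s)$, $D_N(1/2)=1$, and in fact $\varphi_N(1/2)=-1$, which is precisely what the paper uses to obtain the constants $-3/4$, $-n_N/2$, $-m_{1/4,N}$. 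Carrying ``$\varphi_N(1/2)=0$'' through the trace formula (where the term $\frac{1}{4}(1-\varphi(1/2))h(0)$ appears) would shift the constant term of $\M_N(T)$.

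The more serious gap is the mechanism you propose for $\int_0^T S_N(t)\,dt=O(T/\log^2T)$. In the decomposition the paper arrives at, $S_N(T)=R_1(T)-\frac1\pi\bigl(\Im\log(2iT\zeta(1+2iT))-\pi/2\bigr)$, and the Riemann-zeta phase is the harmless part: its integral is $O(\log T)$ by an elementary contour shift to $\Re s=A>1$, the Dirichlet series for $\log\zeta$ there, and the bound $\log\zeta(\sigma+2iT)=O(\log T)$ for $\sigma\ge1$ from \cite{Ti86}; no zero-free region or zero-counting for $\zeta$ is what produces the $\log^2T$, contrary to your attribution. The $T/\log^2T$ bound is a statement about the group's own Selberg zeta function (the hyperbolic length spectrum), i.e.\ Hejhal's Theorem 2.29, and the extra logarithm in the integrated bound comes from a delicate smoothing/convolution argument in the trace formula, not from a Littlewood lemma fed by properties of $\zeta$. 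Indeed, your sketched route -- functional equation of $Z$, Littlewood's lemma, and strip bounds of the shape $\log|Z(\sigma+iT)|\ll T^{2(1-\sigma)}$ -- delivers only $O(T/\log T)$ for the integral; the additional saving of one logarithm is exactly the hard point, and your proposal does not supply it (nor does the classical zero-free region help, since the relevant zeta factor is evaluated on $\Re s\ge1$). Finally, extracting the explicit constants in \eqref{G_N bound} from a smoothed trace formula is harder than you indicate: the paper avoids any smoothing loss by evaluating the identity, elliptic, Gamma-factor and $D_N$ integrals in closed form (via \cite{GR07}) and estimating only the tails, which is where the explicit $1/T$ bound with those particular constants comes from.
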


The word ``average'' in the title of our main theorem relates to the form of
the error term in the Weyl's law.
An average Weyl's law is of importance when it comes to the numerical
computation of Maass forms; see \cite{The12} and references therein.
In particular, when computing Maass cusp forms numerically, there is always
the risk that some solutions get overlooked.
By comparing a numerically found list of eigenvalues of Maass cusp forms
with average Weyl's law, one can easily determine the number of solutions
which have been overlooked.
We refer to \cite{The12} for a detailed discussion of this point.

An immediate consequence of Theorem \ref{AverageWeylLaw} and its proof is the
following corollary.
\begin{corollary}[Classical Weyl's law for $\Gamma_0(N)^+$]
  \label{ClassicalWeylLaw}
  \begin{align*}
    \mathcal{N}_N(T)=\frac{\vol(X_N)}{4\pi}T^2-\frac{2T\log T}{\pi}
    +\frac{T}{\pi}(2+\log(\pi/2N))+O\left(\frac{T}{\log T}\right),
    \ \text{ as }\ T\to\infty.
  \end{align*}
\end{corollary}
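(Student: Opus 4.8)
The plan is to deduce the corollary directly from Theorem~\ref{AverageWeylLaw} in two moves: first absorb every non-principal term of $\M_N(T)$ into the error, and then upgrade the average (integrated) bound on $S_N(T)$ to a pointwise bound by a monotonicity argument.

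First I would record that all terms of $\M_N(T)$ apart from the three displayed principal terms are $O(1)$. By construction $\alpha_N(j,T)=T\log p_j-\lfloor T\log p_j/\pi\rfloor\pi=\pi\{T\log p_j/\pi\}\in[0,\pi)$, so the sum $\frac{1}{2\pi}\sum_{j=1}^r\alpha_N(j,T)$ is bounded; since $\arctan$ is bounded and there are finitely many ($r$) summands, the $\arctan$ contribution is bounded; the elliptic sum, $-\vol(X_N)/48\pi$, $-m_{1/4,N}$, $-3/4$ and $-n_N/2$ are constants independent of $T$; and $G_N(T)=O(1/T)$ by the bound \eqref{G_N bound}. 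Hence
\[
  \M_N(T)=\frac{\vol(X_N)}{4\pi}T^2-\frac{2T\log T}{\pi}
  +\frac{T}{\pi}\bigl(2+\log(\pi/2N)\bigr)+O(1).
\]

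The substantive step is to show $S_N(T)=O(T/\log T)$ starting from $\int_0^T S_N(t)\,dt=O(T/\log^2 T)$, and here I would exploit the monotonicity of the counting function $\mathcal{N}_N$. For a parameter $h>0$, non-decreasingness gives the sandwich $h^{-1}\int_{T-h}^{T}\mathcal{N}_N(t)\,dt\le\mathcal{N}_N(T)\le h^{-1}\int_{T}^{T+h}\mathcal{N}_N(t)\,dt$. Writing $\mathcal{N}_N=\M_N+S_N$ and using that the principal part of $\M_N$ is smooth with derivative $O(t)$ while the remaining terms of $\M_N$ are $O(1)$—so that $h^{-1}\int_T^{T+h}\M_N=\M_N(T)+O(hT)$—together with the integral bound, which contributes $h^{-1}O(T/\log^2 T)$, both the upper and lower estimates take the form
\[
  \mathcal{N}_N(T)=\M_N(T)+O(hT)+O\!\left(\frac{T}{h\log^2 T}\right).
\]
Choosing $h=1/\log T$ balances the two error contributions, each becoming $O(T/\log T)$, whence $S_N(T)=O(T/\log T)$. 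Substituting the expansion of $\M_N(T)$ from the previous paragraph then yields exactly the stated asymptotic.

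The only genuine obstacle is this Tauberian passage: Theorem~\ref{AverageWeylLaw} controls only the mean of $S_N$, so the monotonicity of $\mathcal{N}_N$ must be invoked to convert it into a pointwise estimate, and one must verify that the smooth part of $\M_N$ is regular enough ($\M_N'(t)=O(t)$, with the bounded oscillatory terms contributing only $O(1)$ to the averaged integral) for the windowing to introduce an error no larger than $O(hT)$. Everything else—the boundedness of the $\alpha_N$ and $\arctan$ contributions and of the constant terms—is immediate from the explicit shape of $\M_N(T)$.
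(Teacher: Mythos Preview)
Your argument is correct but follows a genuinely different route from the paper's. The paper does not attempt a Tauberian passage from the integrated bound on $S_N$; instead, it goes back into the proof of Theorem~\ref{AverageWeylLaw} to the explicit decomposition $S_N(T)=R_1(T)-\tfrac{1}{\pi}\bigl(\Im(\log(2iT\zeta(1+2iT)))-\pi/2\bigr)$ (equation~\eqref{definition_of_S}), and then bounds each piece \emph{pointwise}: $R_1(T)=O(T/\log T)$ is already available from Hejhal's Theorem~2.29 (the first estimate in~\eqref{R_1 bound}), while the $\zeta$-term is $O(\log T)$ by Titchmarsh's Theorem~3.5. Your approach, by contrast, treats Theorem~\ref{AverageWeylLaw} as a black box and recovers the pointwise bound $S_N(T)=O(T/\log T)$ from the integrated bound via the monotonicity of $\mathcal{N}_N$ and the windowing trick with $h=1/\log T$. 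The trade-off: the paper's argument is shorter and avoids the Tauberian detour but requires reopening the proof and citing two external pointwise estimates; your argument is self-contained given the statement of Theorem~\ref{AverageWeylLaw}, and the monotonicity trick is a robust general device that would survive even if one did not have such a clean structural decomposition of $S_N$.
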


Generally speaking, the philosophy behind the Phillips-Sarnak conjecture
\cite{PS85,Sa03}
suggests that the spectral analysis of the Laplacian acting on smooth
functions on a finite volume, hyperbolic Riemann surface $M$ should depend on
the arithmetic nature of the underlying Fuchsian group $\Gamma$.
The first terms in the asymptotic expansion in Corollary
\ref{ClassicalWeylLaw} depend solely on the volume of $X_{N}$, and then one
sees that the coefficient of $T$ depends on $N$.
For example, the groups corresponding to $N=5$ and $N=6$ have the same
signature, hence $X_{5}$ and $X_{6}$ have the same volume yet, by Corollary
\ref{ClassicalWeylLaw}, $X_{5}$ has infinitely more eigenvalues than $X_{6}$
in the sense that
\begin{align*}
  \lim\limits_{T \to \infty}
  \frac{\pi}{T}\left( \mathcal{N}_5(T) - \mathcal{N}_6(T)\right)
  = \log(6/5) > 0.
\end{align*}
Later in this article, we provide a list of further examples of topologically
equivalent surfaces associated to moonshine groups which have different Weyl's
laws.
We view these results as being consistent with and in support of the
Phillips-Sarnak philosophy.

Having established that the classical Weyl's law associated to $\Gamma_0(5)^+$
and $\Gamma_0(6)^+$ differ, we find it interesting to investigate other
conjectures concerning the distribution of eigenvalues.
Using the methodology from \cite{The12}, and references therein, we have
numerically computed sets of Maass cusp forms associated to $\Gamma_0(5)^+$
and $\Gamma_0(6)^+$.
On $\Gamma_0(5)^+$ our numerical results cover the range
$0<\lambda\le125^2+1/4$ which includes $3557$ Maass cusp forms, and on
$\Gamma_0(6)^+$ we cover the range $0<\lambda\le230^2+1/4$ which includes
$12474$ Maass cusp forms.
The distribution of the numerically found eigenvalues is in agreement with the
following conjecture.
\begin{conjecture}[Arithmetic Quantum Chaos \cite{BGGS92,BSS92}]\label{aqc}
  On surfaces of constant negative curvature that are generated by arithmetic
  fundamental groups, the distribution of the discrete eigenvalues of the
  hyperbolic Laplacian approaches a Poisson distribution as
  $\lambda\to\infty$.
\end{conjecture}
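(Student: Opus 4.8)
The statement is a conjecture rather than an established theorem, and in fact it remains one of the central open problems in the arithmetic theory of quantum chaos; what I can offer is a plan of attack together with an honest account of why each step is hard. The natural framework is to replace the raw nearest-neighbor spacing distribution by the spectral correlation functions and to study these through the Selberg trace formula for $X_N$. Concretely, one fixes a smooth even test function $h$ whose Fourier transform $g$ is compactly supported, and applies the trace formula to the pair-correlation sum $\sum_{n,m}h(r_n-r_m)$, normalized by the mean level density predicted by Theorem~\ref{AverageWeylLaw}. The spectral side is exactly the quantity whose limiting shape distinguishes Poisson statistics (pair correlation $R_2\equiv1$, spacing law $e^{-s}$) from the random-matrix alternatives, while the geometric side becomes a double sum over the length spectrum of $X_N$, i.e.\ over conjugacy classes of hyperbolic elements of $\Gamma_0(N)^+$.

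The arithmetic input enters through the length spectrum. For a hyperbolic $\gamma$ with primitive length $\ell_\gamma$ one has $2\cosh(\ell_\gamma/2)=|\mathrm{tr}\,\gamma|$, and for the moonshine group the traces range over a discrete arithmetic set; consequently distinct conjugacy classes share the same length with very high multiplicity. The heuristic, following \cite{BGGS92,BSS92}, is that this exponential degeneracy of lengths is precisely the mechanism that suppresses the oscillatory correlations responsible for level repulsion, leaving the uncorrelated Poisson answer. The plan is therefore: first, to isolate the diagonal contribution, meaning pairs $(\gamma,\gamma')$ with $\ell_\gamma=\ell_{\gamma'}$, and show that after normalization it reproduces the Poisson form factor; and second, to prove that the genuinely off-diagonal contribution, from pairs with $\ell_\gamma\ne\ell_{\gamma'}$, is negligible in the limit. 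One would also have to track the continuous spectrum, using the explicit constant term of the Eisenstein series computed earlier in the paper, and verify that the elliptic and identity terms only affect the mean density already recorded in $\M_N(T)$.

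The hard part is the second step, and this is exactly where the problem is open. Controlling the off-diagonal sum requires sharp upper bounds for the length-multiplicity counting function together with genuine square-root cancellation in the resulting exponential sums over traces; near the Heisenberg time the number of contributing geodesics is of the same order as the number of eigenvalues, so no crude counting suffices and one needs arithmetic cancellation rather than mere size estimates. Since the required cancellation is beyond current technology, the realistic and correct conclusion is the one the paper actually adopts: to test Conjecture~\ref{aqc} numerically, by comparing the empirical nearest-neighbor spacing distribution of the computed eigenvalues of $\Gamma_0(5)^+$ and $\Gamma_0(6)^+$ against the Poisson law, as carried out in the subsequent sections using the methodology of \cite{The12}.
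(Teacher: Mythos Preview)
Your proposal is appropriate: the statement is a \emph{conjecture}, and the paper makes no attempt to prove it. The paper simply records Conjecture~\ref{aqc} as motivation and then, in Section~\ref{numerics}, tests it numerically by computing the nearest-neighbour spacing distribution of the eigenvalues on $X_5$ and $X_6$ and comparing with the Poisson law $P_{\text{Poisson}}(s)=e^{-s}$; it also checks the joint spacing distribution to see that consecutive spacings appear uncorrelated. There is no trace-formula analysis of pair correlations in the paper at all.

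What you add beyond the paper is a coherent sketch of \emph{why} one expects Poisson statistics in the arithmetic case: the Selberg trace formula reduces the spectral form factor to sums over the length spectrum, and the high multiplicities of geodesic lengths for arithmetic groups like $\Gamma_0(N)^+$ should make the diagonal contribution dominate, producing the Poisson answer. You are also honest that the off-diagonal control near the Heisenberg time is the obstruction, and that this is genuinely open. This heuristic is exactly the one behind \cite{BGGS92,BSS92}, so your account is correct, but it is supplementary commentary rather than something the paper itself undertakes. Your concluding sentence, that the realistic course is the numerical test carried out with the methodology of \cite{The12}, matches the paper's actual treatment precisely.
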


A particular feature of a Poisson distribution is the ``absence of memory'',
which, in our case, asserts that an eigenvalue cannot be predicted from
knowledge of all the previous eigenvalues.
The computation of eigenvalues allows us to verify that, numerically,
eigenvalues of the Laplacian on $X_{5}$ and $X_{6}$ are uncorrelated.

This paper is organized as follows.
In section \ref{prelim} we provide preliminary material for both the
theoretical and numerical aspects of our work.
Theoretically, we prove that the Riemann surfaces associated to the moonshine
groups $\Gamma_0(N)^+$ for square-free $N$ have one cusp, and we compute the
first Fourier coefficient of the corresponding non-holomorphic Eisenstein
series.
In order to make this article as self-contained as possible, we include a
discussion of Hejhal's algorithm for numerically estimating eigenvalues
together with Turing's method which is used to verify that no eigenvalue has
been missed.
In section \ref{secProof} we prove Theorem \ref{AverageWeylLaw}, and as
corollaries state the result in the cases of $\SL(2,\Z)$, $\Gamma_0(5)^+$ and
$\Gamma_0(6)^+$.
In section \ref{numerics} we state the conclusions from our numerical
investigations, and in section \ref{conclusions} we present various concluding
remarks.

\section{Preliminaries}\label{prelim}

\subsection{Moonshine groups $\Gamma_0(N)^+$}

In this subsection we will derive some important properties of moonshine
groups $\Gamma_0(N)^+$, for a square-free integer $N$.
We will prove they have exactly one cusp.
We then compute the constant Fourier coefficient of the associated
non-holomorphic Eisenstein series.
Equivalently, we compute the scattering determinant associated to the cusp.
We refer to \cite{Hej83} and \cite{Iw02} for relevant background information.

\begin{lemma}
  For every square-free integer $N>1$, the surface $X_N$ has exactly one cusp,
  which can be taken to be at $i\infty$.
\end{lemma}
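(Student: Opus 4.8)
The plan is to identify the cusps of $\Gamma_0(N)^+$ with a set of double cosets and then show that this set is a singleton. First I would recall that $\Gamma_0(N)^+$ contains $\Gamma_0(N)$ as a subgroup of finite index, so every cusp of $X_N$ is represented by a cusp of $\Gamma_0(N)$; it therefore suffices to work with the finitely many cusps of $\Gamma_0(N)$, which are classically indexed by pairs related to the divisors of $N$. More precisely, since $N = p_1 \cdots p_r$ is square-free, the cusps of $\Gamma_0(N)$ are in bijection with the divisors $d \mid N$, a representative of the cusp associated to $d$ being $1/d$ (with $d=N$ giving $i\infty$ and $d=1$ giving $0$). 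The strategy is then to show that all of these $2^r$ cusps of $\Gamma_0(N)$ collapse to a single cusp of $\Gamma_0(N)^+$.

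The key mechanism is the action of the Atkin--Lehner involutions, which lie in $\Gamma_0(N)^+$ by construction. For each exact divisor $e \mid N$ (here every divisor is exact because $N$ is square-free), there is an element $W_e = e^{-1/2}\begin{pmatrix} ea & b \\ Nc & ed \end{pmatrix} \in \Gamma_0(N)^+$ with $e^2 ad - Nbc = e$, i.e.\ $\det = e$ before normalization; these are exactly the matrices appearing in the definition of $\Gamma_0(N)^+$ with $e \mid a$, $e \mid d$, $N \mid c$. The step I expect to carry the argument is the computation that $W_e$ maps the cusp of $\Gamma_0(N)$ labelled by a divisor $d$ to the cusp labelled by $d' = \frac{d\cdot e}{\gcd(d,e)^2} = \frac{N/d \cdot \text{(something)}}{\cdots}$ — in fact the precise statement, which is standard for $\Gamma_0(N)$ with $N$ square-free, is that $W_e$ sends the cusp $1/d$ to the cusp $1/(d \ast e)$ where $\ast$ is the involution $d \ast e = \tfrac{de}{\gcd(d,e)^2}$ on the divisors of $N$ viewed as the group $(\Z/2\Z)^r$. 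Since the divisors of a square-free $N$ under this operation form the group $(\Z/2\Z)^r$ and the Atkin--Lehner involutions realize translation by every element of this group, the orbit of the cusp $i\infty$ (labelled by $d = N$, i.e.\ the identity or the "top" element) under $\langle W_e : e \mid N\rangle \subseteq \Gamma_0(N)^+$ is all $2^r$ cusps. Hence modulo $\Gamma_0(N)^+$ there is exactly one cusp, and we may take it to be $i\infty$.

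The main obstacle is making the cusp-labelling and the action of $W_e$ on cusps precise and self-contained, rather than merely quoting it: one must check that $W_e$ genuinely lies in (the stated form of) $\Gamma_0(N)^+$, that conjugation by $W_e$ permutes $\Gamma_0(N)$-cusps as claimed, and — the subtle point — that no two distinct divisors already give the same $\Gamma_0(N)$-cusp and that the $W_e$ are not already in $\Gamma_0(N)$, so that the count is exactly $2^r \to 1$. I would handle this by an explicit matrix computation: given the cusp $a/c$ with $c \mid N$, write $W_e \cdot \frac{a}{c}$ and compute the gcd of its numerator and denominator with $N$ to read off which divisor-class it lands in, using $\gcd$-manipulations valid because $N$ is square-free (so $\gcd(e, N/e) = 1$ for every $e \mid N$). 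Once the divisor $N$ (cusp $i\infty$) is shown to be equivalent to the divisor $1$ (cusp $0$) via $W_N$, and inductively to every divisor via the $W_{p_i}$, the proof concludes; the choice of $i\infty$ as the representative is then just the observation that $\infty$ is fixed by the parabolic $\begin{pmatrix} 1 & 1 \\ 0 & 1 \end{pmatrix} \in \Gamma_0(N) \subseteq \Gamma_0(N)^+$.
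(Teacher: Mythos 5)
Your proposal is correct and takes essentially the same approach as the paper: reduce to the cusps of $\Gamma_0(N)$, which for square-free $N$ are represented by $1/v$ with $v\mid N$ (together with $0$ and $i\infty$), and then identify them all with $i\infty$ using Atkin--Lehner-type elements of $\Gamma_0(N)^+$. The only differences are in the details: the paper cites Cummins' result that parabolic elements of $\Gamma_0(N)^+$ have integral entries (rather than your finite-index power argument), and instead of invoking the standard formula $1/d\mapsto 1/(d\ast e)$ for the action of $W_e$ on cusps, it writes down directly, for each $v\mid N$ with $w=N/v$ and $-aw-bv=1$, an explicit scaled matrix in $\Gamma_0(N)^+$ (with $e=w$) sending $1/v$ to $i\infty$, plus the Fricke involution for the cusp $0$.
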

\begin{proof}
  The cusps of $X_N$ are uniquely determined by parabolic elements of the
  group $\Gamma_0(N)^+$.
  In \cite{Cum04} it is proved that all parabolic elements of $\Gamma_0(N)^+$
  have integral entries.
  Therefore, the parabolic elements of $\Gamma_0(N)^+$ are also parabolic
  elements of the congruence group $\Gamma_0(N)$.
  From pages 44--47 of \cite{Iw02}, we easily deduce that the only possible
  cusps of $\overline{\Gamma_0(N)^+}\backslash\h$ belong to the set
  $\{0,i\infty\} \cup \{1/v : v \mid N\}$.
  The point $z=0$ is mapped to $i\infty$ by involution
  \begin{align*}
    \begin{pmatrix}0&-1/\sqrt{N}\\\sqrt{N}&0\end{pmatrix}\in\Gamma_0(N)^+.
  \end{align*}

  For an arbitrary $v \mid N$ and $w=N/v$ one has $(w,v)=1$ since $N$ is
  square-free.
  By Euclid's algorithm, there exists integers $a$ and $b$ such that
  $-aw-bv=1$.
  Therefore, points $z=1/v$ are mapped to $i\infty$ by transformation
  \begin{align*}
    \frac{1}{\sqrt{w}}\begin{pmatrix}aw&b\\N&-w\end{pmatrix}\in\Gamma_0(N)^+.
  \end{align*}
  This shows that all possible cusps of $\overline{\Gamma_0(N)^+}\backslash\h$
  are $\Gamma_0(N)^+$-equivalent with $i\infty$.
  Therefore $\overline{\Gamma_0(N)^+}\backslash\h$ has exactly one cusp which
  can be taken to be $i \infty$, as claimed.
\end{proof}

Let $\zeta(s)$ denote the (classical) Riemann zeta function and let $\xi(s)$
be the completed zeta function, defined by
$\xi(s):=\frac{1}{2}s(s-1)\pi^{-s/2}\Gamma(s/2)\zeta(s)$.

\begin{lemma}
  For a square-free, positive integer $N=p_1\cdots p_r$, the scattering
  determinant associated to the cusp of $X_N$ at $i\infty$ is given by the
  following expression
  \begin{align}\label{DefScattViaXi}
    \varphi_N(s)=\frac{s}{s-1}\frac{\xi(2s-1)}{\xi(2s)}\cdot D_N(s),
  \end{align}
  where
  \begin{align*}
    D_N(s):=\frac{1}{N^s}\cdot\prod_{j=1}^r\frac{p_j^s+p_j}{p_j^s+1}.
  \end{align*}
\end{lemma}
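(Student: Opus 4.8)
The plan is to compute the constant Fourier coefficient of the non-holomorphic Eisenstein series $E_N(z,s)$ attached to the unique cusp at $i\infty$ by relating $\Gamma_0(N)^+$ to the congruence subgroup $\Gamma_0(N)$, for which the scattering matrix is classically known. First I would recall that, since $\Gamma_0(N)\subseteq\Gamma_0(N)^+$ with finite index $[\Gamma_0(N)^+:\Gamma_0(N)]=2^r$ (the $2^r$ Atkin--Lehner involutions $w_e$, $e\mid N$, together with $\Gamma_0(N)$ generate $\Gamma_0(N)^+$), the Eisenstein series for the big group can be written as an average over these involutions of the various Eisenstein series of $\Gamma_0(N)$ attached to the cusps $1/v$, $v\mid N$. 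Concretely, because the involution $w_e$ permutes the cusps of $\Gamma_0(N)$, one obtains $E_N(z,s)$ as a finite linear combination (with simple scalar weights coming from the stabilizer/scaling normalizations of the cusps) of $E_{1/v}^{\Gamma_0(N)}(z,s)$ for $v\mid N$. The key input is then the explicit scattering matrix $\Phi^{\Gamma_0(N)}(s)=\bigl(\varphi_{\mathfrak{a}\mathfrak{b}}(s)\bigr)$ for $\Gamma_0(N)$ with $N$ square-free, whose entries are, up to the universal archimedean factor $\sqrt{\pi}\,\frac{\Gamma(s-1/2)\zeta(2s-1)}{\Gamma(s)\zeta(2s)}$, explicit rational functions of the $p_j^s$ — these are tabulated (e.g. in Iwaniec's book and in Hejhal) and for square-free level factor over the primes dividing $N$.

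The main computation is then to collect the constant term. Writing $E_N(z,s)=y^s+\varphi_N(s)\,y^{1-s}+(\text{non-constant Fourier terms})$, I would substitute the decomposition of $E_N$ into the $\Gamma_0(N)$-Eisenstein series, read off each constant term from the rows of $\Phi^{\Gamma_0(N)}(s)$, and sum. The archimedean factors are the same in every entry, so they pull out and immediately give the factor $\sqrt{\pi}\,\frac{\Gamma(s-1/2)\zeta(2s-1)}{\Gamma(s)\zeta(2s)}$; rewriting this in terms of the completed zeta function $\xi$ via $\xi(2s-1)/\xi(2s)$ produces the stated prefactor $\frac{s}{s-1}\cdot\frac{\xi(2s-1)}{\xi(2s)}$ (the ratio $\frac{(2s-1)(2s-2)}{(2s)(2s-1)}=\frac{s-1}{s}$ from the polynomial parts of $\xi$, combined with the $\pi$- and $\Gamma$-factors, matches after a short manipulation). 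What remains is the arithmetic factor, which by multiplicativity over $v\mid N$ reduces to a product over $j=1,\dots,r$ of a local factor at $p_j$; a direct evaluation of the local sum — essentially $\sum_{v_p\in\{1,p\}}(\text{weight})\cdot(\text{local }\varphi\text{-entry})$ — should collapse to $\frac{1}{p_j^s}\cdot\frac{p_j^s+p_j}{p_j^s+1}$, giving $D_N(s)$.

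As a consistency/bootstrapping check I would verify the functional equation $\varphi_N(s)\varphi_N(1-s)=1$ and the known special value $\varphi_N(1/2)=\pm1$ directly from the closed form, and also confirm the single simple pole at $s=1$ with residue $\operatorname{Res}_{s=1}\varphi_N(s)=\vol(X_N)^{-1}\cdot(\text{const})$ consistent with $X_N$ having one cusp — this pins down any overall normalization ambiguity in the cusp-scaling matrices. The main obstacle I anticipate is purely bookkeeping rather than conceptual: correctly tracking the width/scaling normalizations of the cusps $1/v$ of $\Gamma_0(N)$ under the Atkin--Lehner maps, so that the weights in the linear combination are exactly right; an off-by-$p$ or off-by-$\sqrt{p}$ error there would corrupt $D_N(s)$. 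An alternative, slightly cleaner route that avoids some of this is to compute the constant term of $E_N(z,s)$ \emph{ab initio} as a double coset sum $\sum_{\gamma\in\Gamma_\infty\backslash\Gamma_0(N)^+/\Gamma_\infty}$, sorting $\gamma=e^{-1/2}\begin{pmatrix}a&b\\c&d\end{pmatrix}$ by the value of $e\mid N$ and by $c\pmod{N}$; the sum over each $e$-stratum is a Ramanujan-type sum that factors over primes, and assembling the strata yields $D_N(s)$ directly. I would likely carry out both and use each to check the other.
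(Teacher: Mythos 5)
Your primary route is correct but genuinely different from the paper's. The paper argues directly from the definition of the scattering determinant: by Theorem 3.4 of Iwaniec it writes $\varphi_N(s)=\sqrt{\pi}\,\Gamma(s-1/2)\Gamma(s)^{-1}H_N(s)$, determines the set of lower-left entries of $\Gamma_0(N)^+$ to be $\{(N/\sqrt{v})\,n:\ v\mid N,\ n\in\N\}$ together with the count $\A_N((N/\sqrt v)n)=\varphi((N/v)n)$ when $(v,n)=1$ (and $0$ otherwise), and then evaluates the resulting double Dirichlet series $H_N(s)=\sum_{v\mid N}\sum_{(n,v)=1}\varphi((N/v)n)\bigl((N/v)\sqrt v\,n\bigr)^{-2s}$ stratum by stratum via Hejhal's Lemmata 4.5--4.6 and induction on $r$; this is essentially the ``ab initio'' alternative you sketch at the end, with the stratification by $e\mid N$ realized through the lower-left entries. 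Your main proposal instead uses the coset decomposition of $\Gamma_0(N)^+$ over $\Gamma_0(N)$ by the $2^r$ Atkin--Lehner involutions, so that the Eisenstein series of $\Gamma_0(N)^+$ at $i\infty$ is the sum of the $\Gamma_0(N)$-Eisenstein series attached to the cusps $1/v$, $v\mid N$, and $\varphi_N(s)$ becomes a column sum of the known scattering matrix of $\Gamma_0(N)$; the normalization worry you raise is in fact harmless, since $w_e^{-1}$ is itself an admissible scaling matrix for the cusp $w_e^{-1}\infty$ (because $w_e$ normalizes $\Gamma_0(N)$ and conjugates its stabilizer of $\infty$ exactly onto the stabilizer of that cusp), and scattering entries are independent of the choice of scaling matrix. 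A prime-level check confirms the collapse you anticipate: $\frac{(p-1)+(p^s-p^{1-s})}{p^{2s}-1}=\frac{p^s+p}{p^s(p^s+1)}=D_p(s)$, and for square-free $N$ the entries factor over $p\mid N$, so the column sum reproduces exactly the expression $H_N(s)=\frac{\zeta(2s-1)}{\zeta(2s)}\sum_{v\mid N}\prod_{p\mid (N/v)}\frac{p-1}{p^{2s}-1}\prod_{p\mid v}\frac{p^s-p^{1-s}}{p^{2s}-1}$ that the paper obtains. Your route buys a shortcut by outsourcing the arithmetic to the tabulated $\Gamma_0(N)$ scattering matrix; the paper's route is more self-contained (only Hejhal's two lemmas are needed) and yields, as a by-product, the explicit description of the lower-left entries of $\Gamma_0(N)^+$. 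Your archimedean manipulation giving the prefactor $\frac{s}{s-1}\,\xi(2s-1)/\xi(2s)$ is exactly the paper's final step.
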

\begin{proof}
  By Theorem 3.4 from \cite{Iw02} we write
  $\varphi_N(s)= \sqrt{\pi} \Gamma(s-1/2) \Gamma^{-1}(s) H_N(s)$, where
  $H_N (s)$ denotes the Dirichlet series portion of the scattering determinant.
  Let $C_N$ denote the set of left-lower entries of matrices from
  $\Gamma_0(N)^+$.
  Following pages 45--49 from \cite{Iw02}, one sees that
  \begin{align*}
    H_N(s)=\sum_{c\in C_N} c^{-2s} \A_N(c)
  \end{align*}
  is well defined for $\Re(s)>1$, where $\A_N(c)$ is equal to the
  number of distinct values of $d$ modulo $c$ such that $c$ and $d$ are
  elements of the bottom row of the matrix from $\Gamma_0(N)^+$.

  From the definition of $\Gamma_0(N)^+$, we easily deduce that
  $C_N=\{(N/\sqrt{v})\cdot n: v\mid N, n\in \N\}$.

  For a fixed $c=(N/\sqrt{v})\cdot n$, with $v\mid N$ and $n\in\N$
  arbitrary, we can take $e=v$ in the definition of $\Gamma_0(N)^+$ to
  deduce that matrices from $\Gamma_0(N)^+$ with left lower entry $c$ are
  given by
  \begin{align*}
    \begin{pmatrix}\sqrt{v}a&b/\sqrt{v}\\\frac{N}{v}\sqrt{v}n&\sqrt{v}d
    \end{pmatrix}
  \end{align*}
  for some integers $a$, $b$ and $d$ such that $vad-(N/v)bn=1$.
  Therefore, the number $\A_N((N/\sqrt{v})\cdot n)$ is equal to the
  number of distinct solutions $d$ modulo $(N/v)n$ of the equation
  $vad-(N/v)bn=1$.
  Since $N$ is square-free, this equation has a solution if and only if
  $(v,n)=1$ and $(d,(N/v)n)=1$.
  In this case, the number of distinct solutions $d$ modulo $(N/v)n$ is equal
  to $\varphi((N/v)n)$.
  Here, $\varphi$ denotes the Euler totient function and $(p,q)$ denotes the
  greatest common divisor of integers $p$ and $q$.

  Therefore, $\A_N((N/\sqrt{v})\cdot n)=0$ if $(v,n) \neq 1$ and
  $\A_N((N/\sqrt{v})\cdot n)=\varphi((N/v)n)$ if $(v,n)=1$.
  Now, we may conclude that
  \begin{align*}
    H_N(s)=\sum_{v \mid N}\sum_{(n,v)=1}\frac{\varphi\left(\frac{N}{v}n
      \right)}{\left(\frac{N}{v}\sqrt{v} n\right)^{2s}}.
  \end{align*}

  The inner sum on the right-hand side of the above equation may be expressed
  using computations from \cite{Hej83}, specifically Lemmata 4.5 and 4.6 on
  page 535, showing that for positive integers $A_1$, $A_2$, $B_1$ and $B_2$
  one has
  \begin{multline}\label{HejhalSum}
    \sum_{c_0>0\text{: } (c_0, (B_1,A_2)(A_1,B_2))=1}
    \frac{\varphi(c_0\cdot(B_1, B_2)(A_1, A_2))}{c_0^{2s} (A_1, A_2)^s
      (B_1,B_2)^{2s}} \\
    =\frac{\zeta(2s-1)}{\zeta(2s)}\cdot\prod_{p \mid (A_1, A_2)(B_1,B_2)}
    \left(\frac{p-1}{p^{2s-1}}\right) \prod_{p \mid (A_2, B_1)(A_1,B_2)}
    \left(\frac{p^s-p^{1-s}}{p^{2s-1}}\right);
  \end{multline}
  in standard notation, $p$ denotes a prime number, and an empty product is
  defined to be equal to $1$.

  Using formula \eqref{HejhalSum} with $A_1=v$, $A_2=1$; $B_1=N/v$,
  $B_2=N$ and the principle of mathematical induction with respect to the
  number $r$ of distinct prime factors of $N=p_1\cdots p_r$, we deduce that
  \begin{align*}
    H_N(s)=\frac{\zeta(2s-1)}{\zeta(2s)}\cdot\sum_{v \mid N}\left(
    \prod_{p \mid\left(\frac{N}{v}\right)}\frac{p-1}{p^{2s}-1} \prod_{p \mid v}
    \frac{p^s-p^{1-s}}{p^{2s}-1}\right)
    =\frac{\zeta(2s-1)}{\zeta(2s)}\cdot\frac{1}{N^s}\cdot
    \prod_{j=1}^r\frac{p_j^s+p_j}{p_j^s+1}.
  \end{align*}
  Therefore, the scattering matrix $\varphi_N(s)$, for $\Re (s)>1$ is given by
  \begin{align*}
    \varphi_N(s)=\sqrt{\pi}\frac{\Gamma(s-1/2)}{\Gamma(s)}\cdot
    \frac{\zeta(2s-1)}{\zeta(2s)}\cdot D_N(s),
  \end{align*}
  The statement of the lemma follows from the definition of the completed zeta
  function, which completes the proof of the Lemma.
\end{proof}

\begin{remark}\rm
  The determinant of the scattering matrix for congruence subgroups has
  been computed by Hejhal \cite{Hej83} and Huxley \cite{Hu84}.
\end{remark}

\subsection{Moonshine groups $\Gamma_0(5)^+$ and $\Gamma_0(6)^+$}

The moonshine group $\Gamma_0(5)^+$ is generated by
\begin{align*}
  g_1=\begin{pmatrix}1&1\\0&1\end{pmatrix}, \quad
  g_2=\frac{1}{\sqrt{5}} \begin{pmatrix}5&-1\\5&0\end{pmatrix}, \quad
    g_3=\frac{1}{\sqrt{5}}\begin{pmatrix}5&-3\\10&-5\end{pmatrix},
\end{align*}
and the moonshine group $\Gamma_0(6)^+$ is generated by
\begin{align*}
  g_1=\begin{pmatrix}1&1\\0&1\end{pmatrix}, \quad
  g_2=\frac{1}{\sqrt{6}}\begin{pmatrix}6&-1\\6&0\end{pmatrix}, \quad
    g_3=\frac{1}{\sqrt{3}}\begin{pmatrix}3&-2\\6&-3\end{pmatrix},
\end{align*}
see \cite{Cum10}.
Fundamental domains of $X_5=\overline{\Gamma_0(5)^+}\backslash\h$ and
$X_6=\overline{\Gamma_0(6)^+}\backslash\h$ are displayed in figure
\ref{fund dom}.
For both, $X_5$ and $X_6$, the sides are identified according to the pairings
\begin{align*}
  g_1:s_1\mapsto s_6, \quad
  g_2:s_2\mapsto s_5, \quad
  g_3:s_3\mapsto s_4.
\end{align*}
Both $X_5$ and $X_6$ have a cusp at $v_1=i\infty$, and each surface has three
inequivalent elliptic fixed points which are all of order $2$.
The elliptic fixed points are
\begin{align*}
  g_1^{-1}g_2&:v_2\mapsto v_2 \quad\text{which is $\Gamma$-equivalent with}
  \quad v_6=g_1 v_2, \\
  g_2^{-1}g_3&:v_3\mapsto v_3 \quad\text{which is $\Gamma$-equivalent with}
  \quad v_5=g_3 v_3, \\
  \text{and} \qquad g_3&:v_4\mapsto v_4.
\end{align*}
By the Gauss-Bonnet theorem, the volumes of the surfaces are $\vol(X_5)=\pi$
and $\vol(X_6)=\pi$.

\begin{figure}
  \includegraphics{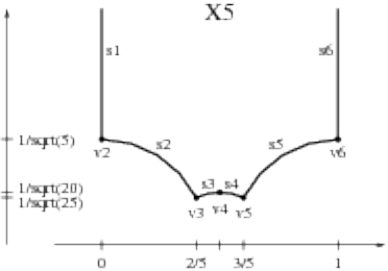} \hfill \includegraphics{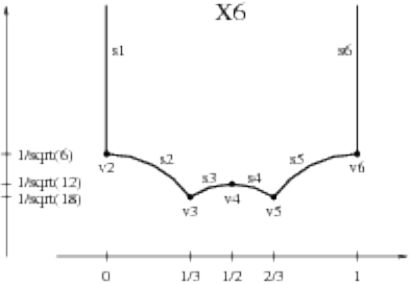}
  \caption{\label{fund dom}Dirichlet fundamental domains of the moonshine
    groups $\Gamma_0(5)^+$ (left), and $\Gamma_0(6)^+$ (right).}
\end{figure}

\subsection{Str\"{o}mbergsson's pullback algorithm}

In 2000, Str\"{o}mbergsson \cite{St00} presented an algorithm for
computing the pullback of any point $z \in \h$ into the Dirichlet fundamental
domain of a given cofinite Fuchsian group $\Gamma$ with prescribed generators.
Str\"{o}mbergsson's algorithm uses only the action of generators of the group
$\Gamma$ applied to the point $z$,
and the algorithm is shown to converge after a finite number of iterations.
Computation of the pullback of a point $z \in \h$ to the Dirichlet fundamental
domain of $\Gamma$ is an ingredient in Hejhal's algorithm for computing
Maass forms, recalled below.
Therefore, Str\"{o}mbergsson's algorithm is an important part of our numerical
computations of eigenvalues of Maass forms on $X_5$ and $X_6$.

For the sake of completeness, we will recall the Str\"{o}mbergsson algorithm
in its full generality.
Assume that $\Gamma$ is a cofinite Fuchsian group with generators
$g_1, \ldots, g_n$ and set of elliptic fixed points $\mathcal{E}$.
Let $d(z,w)$ denote the hyperbolic distance between two points $z$ and $w$ in
$\h$.
The associated Dirichlet fundamental domain is the set
\begin{align*}
  \F=\{z\in\h\ |\ d(p,z)\le d(p,\gamma z)\ \forall\gamma\in\Gamma\},
\end{align*}
where $p\in \h - \mathcal{E}$ is arbitrary.
The given generators of $\Gamma$ identify the sides of $\F$.
Str\"{o}mbergsson's algorithm for computing the pullback
of any point $z \in \h$ into $\F$ is the following.

\begin{alg}[Pullback algorithm \cite{St00}]
  Choose any $z\in\h$.
  \begin{enumerate}
  \item\label{alg:pullback.2} Compute the $2n$ points
    $g_1z,g_1^{-1}z,g_2z,g_2^{-1}z,\ldots,g_n^{-1}z$.
    Let $z'$ be the one of these points which has the {\em smallest}
    hyperbolic distance to $p$.
  \item If $d(p,z')<d(p,z)$, then {\em replace $z$ by $z'$}, and
    repeat with step \ref{alg:pullback.2}.
  \item If $d(p,z')\ge d(p,z)$, then we {\em know that $z$
    lies in $\F$}, hence $z$ is the desired point, i.e.\ the pullback of the
    point initially selected.
  \end{enumerate}
\end{alg}
Str\"ombergsson proved that his algorithm always finds the pullback
within a finite number of operations \cite{St00}.

We use $z^*=x^*+iy^*$ to denote the pullback of $z=x+iy$.

\subsection{Maass forms on $\Gamma_0(N)^+$}

Let us recall the definition of Maass forms \cite{Maa49} and Maass cusp forms.
\begin{definition}
  $f:\h\to\R$ is a Maass form on $\Gamma_0(N)^+$ associated to the eigenvalue
  $\lambda$ if and only if
  \begin{enumerate}
  \item[i)] $f\in C^\infty(\h)$,
  \item[ii)] $f\in L^2(X_N)$,
  \item[iii)] $-\Delta f(z)=\lambda f(z)$,
  \item[iv)] $f(\gamma z)=f(z)\ \forall\gamma\in\Gamma_0(N)^+$.
  \end{enumerate}
\end{definition}
\begin{definition}
  $f:\h\to\R$ is a Maass cusp form on $\Gamma_0(N)^+$ if and only if
  \begin{enumerate}
  \item[i)] $f$ is a Maass form on $\Gamma_0(N)^+$,
  \item[ii)] $\lim_{z\to i\infty}f(z)=0$.
  \end{enumerate}
\end{definition}
For $z=x+iy \in \h$, the Fourier expansion of a Maass cusp form associated to
the eigenvalue $\lambda=r^2+1/4$ is given by
\begin{align}\label{FourierExpMaass}
  f(x+iy)=\sum_{n\in\Z-\{0\}}a_ny^{1/2}K_{ir}(2\pi|n|y)e^{2\pi inx},
\end{align}
where $K$ stands for the $K$-Bessel function.
Since a Maass form is real analytic, we have
$\Re a_{-n}=\Re a_n$ and $\Im a_{-n}=-\Im a_n$.

As first proved in \cite{Maa49}, the spectral coefficients $a_n$ grow at most
polynomially in $n$.
The $K$-Bessel function decays exponentially for large arguments, meaning
\begin{align*}
  K_{ir}(y)\sim\sqrt{\frac{\pi}{2y}}e^{-y}\ \text{ for }\ y\to\infty.
\end{align*}
As a result, one can obtain a very good approximation of the expansion
\eqref{FourierExpMaass} by using finitely many terms, where the number of
terms considered depends on the desired accuracy of the approximation.

Let $\F_N\simeq\overline{\Gamma_0(N)^+}\backslash\h$ be the fundamental domain
of $\Gamma_{0}(N)^{+}$.
Let $z^*=x^*+iy^*$ be the $\Gamma _0(N)^+$-pullback of the point $z=x+iy$ into
the fundamental domain, meaning there exists some $\gamma\in\Gamma_0(N)^+$
such that $z^*=\gamma z$ and $z^*\in\F_N$.
By the definition of automorphy, we have that $f(z)=f(z^*)$.

Since the congruence group $\Gamma _0(N)$ is a subgroup of $\Gamma _0(N)^+$,
we immediately deduce the following lemma.
\begin{lemma}\label{MaassFormsLemma}
  If $f$ is a Maass form on $\Gamma _0(N)^+$, then $f$ is a Maass form on
  $\Gamma _0(N)$.
\end{lemma}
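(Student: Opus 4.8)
The plan is to verify directly the four defining conditions of a Maass form on $\Gamma_0(N)$, using the single input that $\Gamma_0(N)\subseteq\Gamma_0(N)^+$, which one reads off the definition of $\Gamma_0(N)^+$ by taking $e=1$. Conditions (i) and (iii) — smoothness $f\in C^\infty(\h)$ and the eigenvalue equation $-\Delta f=\lambda f$ — involve only $f$ as a function on $\h$ and not the group, so they transfer with nothing to prove. Condition (iv), automorphy under $\Gamma_0(N)$, is equally immediate: $f(\gamma z)=f(z)$ is assumed for all $\gamma\in\Gamma_0(N)^+$, hence a fortiori for all $\gamma\in\Gamma_0(N)$.

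The only condition needing an argument is square-integrability, i.e.\ passing from $f\in L^2(X_N)$ to $f\in L^2(\overline{\Gamma_0(N)}\backslash\h)$. First I would record that the index $\kappa:=[\,\overline{\Gamma_0(N)^+}:\overline{\Gamma_0(N)}\,]$ is finite: the two groups are commensurable (as recalled in the introduction, both being commensurable with $\SL(2,\Z)$), and concretely both are discrete of finite covolume — the covolume of $\Gamma_0(N)$ is classical and that of $\overline{\Gamma_0(N)^+}$ equals $\vol(X_N)<\infty$ — so $\kappa=\vol(\overline{\Gamma_0(N)}\backslash\h)/\vol(X_N)$. Thus the natural map $\overline{\Gamma_0(N)}\backslash\h\to X_N$ is a $\kappa$-sheeted covering. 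Because $|f|^2$ is $\Gamma_0(N)^+$-invariant by (iv), it descends to a function on $X_N$ integrable by hypothesis, and its pullback to the cover is just $|f|^2$ on $\overline{\Gamma_0(N)}\backslash\h$; integrating over the cover multiplies by the number of sheets, giving
\[
\int_{\overline{\Gamma_0(N)}\backslash\h}|f(z)|^2\,d\mu(z)
=\kappa\int_{X_N}|f(z)|^2\,d\mu(z)<\infty,
\]
where $d\mu$ denotes the hyperbolic area element. This is condition (ii) for $\Gamma_0(N)$ and finishes the verification. Equivalently, one may choose coset representatives $\gamma_1,\dots,\gamma_\kappa$ for $\Gamma_0(N)$ in $\Gamma_0(N)^+$, write a fundamental domain for $\Gamma_0(N)$ as $\bigcup_{j=1}^{\kappa}\gamma_j\F_N$ up to a set of measure zero, and evaluate each term by the change of variables $z\mapsto\gamma_j z$ together with the $\Gamma_0(N)^+$-invariance of $|f|^2$.

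I do not anticipate a genuine obstacle: every step is routine, and the single point of substance is the finiteness of $[\Gamma_0(N)^+:\Gamma_0(N)]$ — that is, the commensurability of the two groups — which is precisely the mechanism by which the $L^2$ condition descends to the subgroup. One should only take mild care that the surface $\overline{\Gamma_0(N)}\backslash\h$ is the relevant quotient in condition (ii) for $\Gamma_0(N)$, and that the covering (or coset) bookkeeping is carried out up to null sets, as usual.
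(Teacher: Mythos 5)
Your proof is correct and takes essentially the same route as the paper, which simply deduces the lemma from the inclusion $\Gamma_0(N)\subseteq\Gamma_0(N)^+$ (the case $e=1$ of the definition) without further elaboration. Your only addition is the finite-index/covering argument showing that the $L^2$ condition descends, a point the paper leaves implicit.
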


\subsection{Hecke operators}

Let us recall the definition of Hecke operators.
There are many references for
this material, one of which being \cite{Sh71}.

\begin{definition}
  Let $f:\h\to\R$, and $n$ a positive integer.
  The Hecke operator $T_n$ is defined by
  \begin{align*}
    T_nf(z)=\frac1{\sqrt{n}}\sum_{\substack{ad=n\\d>0}}\sum_{b=0}^{d-1}
    f(\frac{az+b}d).
  \end{align*}
\end{definition}

\begin{theorem}[\cite{AL70,Sh71}]\label{HeckeOpTheorem}
  Consider the congruence group $\Gamma_0(N)$.
  For all $n$ such that $(n,N)=1$, the Hecke operators $T_n$ are
  endomorphisms of the space of Maass cusp forms on $\Gamma_0(N)$.
  For all $m$ and $n$ with $(m,N)=(n,N)=1$ and all Maass cusp
  forms $f(z)$ on $\Gamma_0(N)$, the Hecke operators have the following
  properties:
  \begin{align*}
    &T_mT_n=\sum_{d|(m,n)}T_{\frac{mn}{d^2}}, \\
    &T_n\circ\Delta=\Delta\circ T_n, \\
    &T_nf(z)=t_nf(z),
  \end{align*}
  where the eigenvalues $t_n$ of the Hecke operators $T_n$ are related to the
  expansion coefficients $a_n$ of the Maass cusp form $f(z)$ by the identity
  \begin{align*}
    a_n=a_1t_n.
  \end{align*}
\end{theorem}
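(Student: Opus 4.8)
The plan is to realize $T_n$ through the action of the double coset $\Gamma_0(N)\,\bigl(\begin{smallmatrix}1&0\\0&n\end{smallmatrix}\bigr)\,\Gamma_0(N)$ and to reduce each assertion of the theorem to an elementary statement about its coset representatives. Write $\Delta_n$ for the set of integral $2\times 2$ matrices of determinant $n$ whose lower-left entry is divisible by $N$. First I would record the two standard facts that (a) the matrices $\bigl(\begin{smallmatrix}a&b\\0&d\end{smallmatrix}\bigr)$ with $ad=n$, $d>0$ and $0\le b<d$ form a complete set of representatives for $\Gamma_0(N)\backslash\Delta_n$, and (b) for every $\gamma\in\Gamma_0(N)$ right multiplication $M\mapsto M\gamma$ permutes the cosets in $\Gamma_0(N)\backslash\Delta_n$; when $(n,N)=1$ the upper-left entry of every $M\in\Delta_n$ is automatically prime to $N$, which is what makes the combinatorics of these cosets match the $\SL(2,\Z)$ case. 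Granted (a) and (b), the $\Gamma_0(N)$-invariance of $T_nf$ is immediate: $T_nf(\gamma z)$ is the same sum over $\Gamma_0(N)\backslash\Delta_n$, merely reindexed, and $f$ is $\Gamma_0(N)$-invariant.

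Next I would check that $T_n$ preserves the remaining defining properties of a Maass cusp form. Smoothness of $T_nf$ is inherited from that of $f$. For the eigenvalue property I would use that the hyperbolic Laplacian $-\Delta$ is invariant under the action of $\mathrm{GL}(2,\R)^{+}$ on $\h$ by fractional linear transformations, so that $\Delta\bigl(f(\sigma z)\bigr)=(\Delta f)(\sigma z)$ for each fixed $\sigma$; summing over the finitely many coset representatives gives $T_n\circ\Delta=\Delta\circ T_n$, and hence $-\Delta(T_nf)=\lambda\,T_nf$. Square-integrability on $X_N$ and vanishing in the cusp follow because each summand $f\bigl((az+b)/d\bigr)$ has, by the Fourier expansion \eqref{FourierExpMaass} and the exponential decay of the $K$-Bessel function, the same rapid decay as $f$ when $y\to\infty$. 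This establishes that $T_n$ is an endomorphism of the space of Maass cusp forms on $\Gamma_0(N)$.

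For the Fourier-coefficient identity I would substitute \eqref{FourierExpMaass} into the definition of $T_nf$. Writing $(az+b)/d=(ax+b)/d+iay/d$, the sum over $b$ collapses through the orthogonality relation $\sum_{b=0}^{d-1}e^{2\pi imb/d}$, which equals $d$ if $d\mid m$ and $0$ otherwise; writing $m=dm'$ and using the elementary identity $d\cdot W_{dm'}(ay/d)=\sqrt{n}\,W_{am'}(y)$ valid when $ad=n$, where $W_k(y):=y^{1/2}K_{ir}(2\pi|k|y)$, one finds that the $k$-th Fourier coefficient of $T_nf$ equals $\sum_{a\mid(n,k)}a_{nk/a^2}$. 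Taking $k=1$ shows the first Fourier coefficient of $T_nf$ equals $a_n$; hence if $f$ is a simultaneous Hecke eigenform with $T_nf=t_nf$, then comparing first coefficients gives $a_n=t_na_1$, while comparing $k$-th coefficients gives $t_na_k=\sum_{a\mid(n,k)}a_{nk/a^2}$. Finally, the commutation $T_n\circ\Delta=\Delta\circ T_n$ together with the self-adjointness of each $T_n$, $(n,N)=1$, with respect to the $L^2(X_N)$ inner product — which one checks by an unfolding argument — provides a basis of simultaneous eigenforms on each $\Delta$-eigenspace, and the operator relation $T_mT_n=\sum_{d\mid(m,n)}T_{mn/d^2}$ then follows by evaluating both sides on such a basis; alternatively it follows directly from the decomposition of the product of the corresponding double cosets into single double cosets, a purely combinatorial lattice-counting statement which uses $(mn,N)=1$.

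I expect the main obstacle to be the clean verification of the coset bookkeeping in the first step — that the explicit upper-triangular matrices exhaust $\Gamma_0(N)\backslash\Delta_n$ and that this family of cosets is stable under right multiplication by $\Gamma_0(N)$ — together with the analogous decomposition of a product of two double cosets needed for the multiplicativity relation. Once this formalism is in place, the Laplacian commutation, the decay estimates, and the Fourier-coefficient computation are all routine.
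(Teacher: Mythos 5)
The paper does not prove this theorem at all --- it is quoted from the cited references (Atkin--Lehner, Shimura, Str\"omberg) --- and your outline is exactly the standard double-coset plus Fourier-coefficient argument given there, and it is correct: the coset representatives in (a)--(b) are right (using $(n,N)=1$ so that the upper-left entries are prime to $N$), the Bessel-function bookkeeping gives the $k$-th coefficient $\sum_{a\mid(n,k)}a_{nk/a^2}$, and $k=1$ yields $a_n=a_1t_n$. Two small points to tighten if you write this out: a Maass cusp form on $\Gamma_0(N)$ must vanish at \emph{all} cusps of $\Gamma_0(N)$, not only at $i\infty$, which in your setup follows because each summand $f(\sigma z)$ with $\sigma\in\mathrm{GL}_2^{+}(\mathbb{Q})$ inherits rapid decay at every cusp from the cuspidality of $f$ (every rational point is a cusp of $\Gamma_0(N)$); and for the relation $T_mT_n=\sum_{d\mid(m,n)}T_{mn/d^2}$ prefer your combinatorial double-coset route (or verify it directly on the coefficient formula for arbitrary $f$), since the eigenbasis argument is delicate for oldforms with $a_1=0$.
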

For a proof of the theorem, see \cite{AL70}, \cite{Sh71}, or \cite{St12}.

Theorem \ref{HeckeOpTheorem} immediately implies that the Fourier
coefficients of Maass cusp forms on $\Gamma_0(N)$ are multiplicative,
\begin{align*}
  a_ma_n=a_1\sum_{\substack{d|(m,n)\\d>0}}a_{\frac{mn}{d^2}}
\end{align*}
for all $m$ and $n$ with $(m,N)=(n,N)=1$.
By Lemma \ref{MaassFormsLemma}, this holds also for Maass cusp forms on
$\Gamma_0(N)^+$.

\subsection{Hejhal's algorithm}\label{HejhalAlgorithm}

We make use of Hejhal's algorithm \cite{Hej99,The05} which itself employs the
Fourier expansion \eqref{FourierExpMaass} of Maass cusp forms.

Hejhal's algorithm is a finite system of linear equations whose non-trivial
solutions are related to Maass cusp forms.
Hejhal's algorithm is heuristic.
By construction, a Maass cusp form always will solve the linear
equations of the algorithm to any desired level of accuracy, but the
converse is not true.
Not each solution of the finite system of linear equations is a Maass cusp
form.
Only in the case when a solution is independent of the parameters will
the solution approximate a Maass cusp form.
The crucial parameter in question is the choice of the value of $y$ in
\eqref{Hejhal}.
The computation of Maass cusp forms therefore proceeds in two steps:
Heuristic use of Hejhal's algorithm, followed by a verification of the
numerical results.

Theoretically, Maass cusp forms can be rigorously certified as was shown in
\cite{BSV06} in the example of the modular group.
Using the quasi-mode construction, Booker, Str\"ombergsson, and Venkatesh
have certified the first $10$ eigenvalues of $\SL(2,\Z)$.
The certification techniques can be adopted to other settings,
such as to the moonshine groups.
Practically, however, we have to bear in mind that rigorously certifying
eigenvalues requires immense computer resources and it is infeasible to
certify thousands of Maass cusp forms.
For this reason, we just verify the numerical results with a different,
{\em not fully rigorous} method.

The verification is based on the following:
\begin{enumerate}
\item Fix $y$.
\item Find non-trivial solutions of Hejhal's system of linear equations.
\item Take a finite number of different values of $y$, and check
  whether the non-trivial solutions seem to be independent of $y$.
\item Take only the solutions which are seemingly independent of $y$ and
  make a list of conjectured Maass cusp forms.
\end{enumerate}
In the end, there will be strong evidence, but not a proof, that the list of
conjectured Maass cusp forms is indeed a list of true Maass cusp forms.
It is the experience of those who implement the algorithm that
{\em more than half} of the non-trivial solutions of Hejhal's system of
equations for a fixed value of $y$ are {\em not} Maass cusp forms.
Taking a second choice for $y$ immediately rules out {\em almost all}
solutions which are not a Maass cusp form.

There remains the possibility that a solution could solve Hejhal's linear
system of equations for two independent values of $y$ whilest not being
a Maass cusp form.
We have further checked whether this has happened by employing several
independent values of $y$.
Empirically, it turned out that as soon as some function solves Hejhal's
system of equations for two independent values of $y$, it does so for
any finite number of independent values of $y$ also.
And we conjecture that it does so for any other value of $y$.

Further evidence comes from a second verification based on the Hecke operators.
According to the Hecke operators, the expansion coefficients of Maass forms
are multiplicative.
When solving Hejhal's system of linear equations, there is no reason that
the coefficients of a solution are multiplicative, but only
those solutions whose coefficients are multiplicative can be Maass cusp forms.

Numerically, for {\em each individual} solution of Hejhal's system of linear
equations we have investigated and found that a solution is seemingly
independent of $y$ {\em if and only if} the expansion coefficients of the
solution are multiplicative.
This means both verifications agree in their answer.

Let us now recall Hejhal's algorithm.

Since $\Gamma_0(N)^+$ is cofinite and has only one cusp at $i\infty$, we can
bound $y$ from below.
Allowing for a small numerical error of at most $[[\eps]]$, where $[[\eps]]$
stands for $|\textit{numerical error}|\lesssim\eps$, due to the
exponential decay of the $K-$Bessel function in $y$, we can truncate the
absolutely convergent Fourier expansion \eqref{FourierExpMaass} such that
\begin{align}\label{TruncatedExp}
  f(x+iy)=\sum_{0\not=|n|\le M(\eps,r,y)}a_ny^{1/2}K_{ir}(2\pi|n|y)e^{2\pi inx}
  +[[\eps]].
\end{align}
Solving for the spectral coefficients results in the equation
\begin{align}\label{a_m}
  a_my^{1/2}K_{ir}(2\pi|m|y)=\frac{1}{2Q}\sum_{j=1}^{2Q}f(\frac{j}{2Q}+iy)
  e^{-2\pi im\frac{j}{2Q}}+[[\eps]],
\end{align}
with $2Q>M+m$.

By automorphy, any Maass cusp form can be approximated by
\begin{align}\label{f(z^*)}
  f(x+iy)=f(x^*+iy^*)=\sum_{0\not=|n|\le M_0}a_n{y^*}^{1/2}K_{ir}(2\pi|n|y^*)
  e^{2\pi inx^*}+[[\eps]],
\end{align}
where $y^*$ is always larger than or equal to the height of the lowest point
of the fundamental domain $\F$, allowing us to replace
$M(\eps,r,y^*)$ by $M_0=M(\eps,r,\min_{w\in\F}\Im w)$.

Making use of the implicit automorphy by replacing $f(x+iy)$ in \eqref{a_m}
with the right-hand side of \eqref{f(z^*)} yields
\begin{align}\label{Hejhal}
  a_my^{1/2}K_{ir}(2\pi|m|y)=\frac{1}{2Q}\sum_{j=1}^{2Q}\sum_{0\not=|n|\le M_0}
  a_n{y_j^*}^{1/2}K_{ir}(2\pi|n|y_j^*)e^{2\pi i(nx_j^*-mx_j)}+[[2\eps]],
  \ \text{ where }\ x_j+iy_j=\frac{j}{2Q}+iy,
\end{align}
for $0\not=|m|\le M(\eps,r,y)$, which is the central identity of the algorithm.

We are looking for non-trivial solutions numerically such that \eqref{Hejhal}
vanishes simultaneously for all $0\not=|m|\le M_0$ and $0<y<\min_{w\in\F}\Im w$.
Each non-trivial solution gives a Maass cusp form whose eigenvalue reads
$\lambda=r^2+1/4$.

We first solve \eqref{Hejhal} for all $0\not=|m|\le M_0$ numerically,
but use a single value of $y$ only.
Then, we verify with a finite number of values of $y$, whether we have found a
non-trivial solution such that \eqref{Hejhal} vanishes simultaneously for all
$0\not=|m|\le M_0$ for each value of $y$.
If the solution turns out to be seemingly independent of $y$, we finally check
whether the expansion coefficients $a_n$ are multiplicative.
If also the expansion coefficients turn out to be multiplicative, we have
verified that the numerically found solution of \eqref{Hejhal} is a Maass cusp
form.

Let us now specify good parameter values for solving \eqref{Hejhal}
numerically.
\begin{algorithm}[Parameter values]
  Let $\tilde\lambda=t^2+1/4$ be close to an eigenvalue.
  Let the precision be given by $\eps>0$.
  Then for $\lambda$ near $\tilde\lambda$ we choose the values of the
  parameters as follows:
  \begin{enumerate}
  \item Solve $\eps K_{it}(\max\{t,1\})=K_{it}(2\pi M_0\min_{w\in\F}\Im w)$
    in $M_0$ with $2\pi M_0\min_{w\in\F}\Im w>\max\{t,1\}$.
  \item Let
    $\displaystyle y=\frac{9}{10}\frac{\max\{t,1\}}{2\pi M_0}$.
  \item\label{parameters.M} Solve
    $\eps K_{it}(\max\{t,1\})=K_{it}(2\pi My)$ in $M$
    with $2\pi My > \max\{t,1\}$, i.e.\
    $\displaystyle M=\frac{\min_{w\in\F}\Im w}{y}M_0$.
  \item Let $Q$ be the smallest integer which is larger than $M$.
  \item Check whether \eqref{Hejhal} is well conditioned for the given $y$
    and all $0\not=|m|\le M_0$.
    If not, reduce $y$ slightly and repeat with \ref{parameters.M}.
  \end{enumerate}
\end{algorithm}
For verifying that \eqref{Hejhal} vanishes simultaneously for all
$0\not=|m|\le M_0$ for a finite number of values of $y$, we use
$\displaystyle y=\frac{\max\{t,1\}}{2\pi M_0}$, and check whether
\eqref{Hejhal} is well conditioned.
If \eqref{Hejhal} is not well conditioned, we reduce $y$ slightly.
The algorithm ensures that we never reduce $y$ by a factor of $9/10$ or more.
Now we check whether \eqref{Hejhal} vanishes simultaneously for all
$0\not=|m|\le M_0$ for the given $y$.
If \eqref{Hejhal} does vanish, we continue with a finite number of random
choices for the value of
$\displaystyle
y\in\Big(\frac{9}{10}\frac{\max\{t,1\}}{2\pi M_0},\min_{w\in\F}\Im w\Big]$
and check for each value of $y$ whether \eqref{Hejhal} vanishes for all
$0\not=|m|\le M_0$.

\subsection{Turing's method}\label{Turing}

Turing's method is a method of verification that the list of eigenvalues of
Maass cusp forms is \textit{consecutive}, once we have a suitable bound for
the error term $S(t)$ in ``average'' Weyl's law $\mathcal{N}(t)=\M(t)+S(t)$.
Roughly speaking, the method is the following.
Assume that the error term $S(t)$ in the ``average'' Weyl's law for the
corresponding surface satisfies a bound of the type
\begin{align*}
  E_l(T) \le \left\langle S(T)\right\rangle:= \frac{1}{T} \int \limits_{0}^{T}
  S(t)dt \le E_u(T),
\end{align*}
where $E_l(T) \to 0$ and $E_u(T) \to 0$, as $T \to \infty$.
Then, we have the following test of consecutiveness \cite{Boo06,Tur53}.

Step 1. Compute $\mathcal{N}^{\text{num}}(T)$; the number of numerically found
eigenvalues in the interval $1/4<\lambda\le T^2+1/4$ and denote by
\begin{align*}
  S^{\text{num}}(T):=\mathcal{N}^{\text{num}}(T)-\M(T)
\end{align*}
the difference between the number of numerically found eigenvalues and the
average Weyl's law.

Step 2. Add a ``fake'' eigenvalue $\lambda_{\textrm{fake}}$ near the end of the
list of eigenvalues and compute $\left\langle S^{\text{num}}(T) \right\rangle$.
If the value $\left\langle S^{\text{num}}(T) \right\rangle$ exceeds $E_u(T)$,
then the list of eigenvalues is consecutive in the interval
$1/4 < \lambda \le \lambda_{\textrm{fake}}$.

\section{Average Weyl's law for $\Gamma_0(N)^+$}\label{secProof}

In this section we prove Theorem \ref{AverageWeylLaw}.

Let us recall that $N=p_1\cdots p_r$ is a square-free positive integer, and
define the function
\begin{align*}
  \alpha_N(j,T):=T\log p_j-\lfloor\frac{T\log p_j}{\pi} \rfloor\pi,
\end{align*}
where, as previously stated, $\lfloor x\rfloor$ denotes the greatest integer
less than or equal to $x$.
Let $X_N=\overline{\Gamma_0(N)^+}\backslash\h$ be the Riemann surface
associated to the Fuchsian group $\Gamma_{0}^{+}(N)$, and let $Z_{X_N}$ denote
the Selberg zeta function associated to $X_N$.

Let $A\in (1,3/2)$ and $T>1$ be arbitrary real numbers, and let
$R(A)$ be the rectangle with vertices $1-A-iT$, $A-iT$, $A+iT$, $1-A+iT$.
Without loss of generality, we assume that $A$ and $T$ are such that
$Z_{X_N}(s) \neq 0$ for $s\in\partial R(A)$.
Formula (5.3) on p.~498 of \cite{Hej83} states the location of zeros
and poles of the Selberg zeta function $Z_{X_N}$.
In the notation of \cite{Hej83}, one has that $m=0$ and $W=\operatorname{id}$.
Furthermore, $\varphi_N(1/2)=-1$, hence, application of Theorem 4.1 on p.~482
and formula (4.6) on p.~485 of \cite{Hej83} yields that, in the notation of
formula (5.3), one has $A+B-K_0-C=2g-2$.
Therefore,
\begin{align*}
  \frac{1}{2\pi i} \int_{\partial R(A)}\frac{Z_{X_N}'}{Z_{X_N}}(s)ds
  =2\mathcal{N}_N[0<r_n\le T]+2Q_N[0<\Im (\rho)\le T]+2g-2+n_N+2m_{1/4, N},
\end{align*}
where $Q_N[0<\Im (\rho)\le T]$ denotes the number of zeros $\rho$
of the scattering determinant $\varphi_N$ with $\Im (\rho)\in (0, T]$.

Let $\partial P(A)$ denote the polygonal path joining points
$1/2-iT$, $A-iT$, $A+iT$ and $1/2+iT$.
Using the functional equation for the function
$\mathcal{D}_N(s):=\frac{Z_{X_N}'}{Z_{X_N}}(s)$, as in the proof of
Theorem 2.28 on pp.~466--467 of \cite{Hej83}, we can write
\begin{align}\label{sumzeros}
  \mathcal{N}_N(T)+Q_N[0<\Im (\rho)\le T]=1-g-\frac{n_N}{2}-m_{1/4, N}
  +R_1(T)+\frac{1}{4\pi i}\int\limits_{\partial P(A)}
  \frac{\varphi_N'}{ \varphi_N}(s)ds-\frac{1}{4\pi i}
  \int\limits_{\partial P(A)} \mathcal{C}_N(s)ds,
\end{align}
where
\begin{align*}
  R_1(T) :=\frac{1}{2\pi i}\int\limits_{\partial P(A)} \mathcal{D}_N(s) ds
\end{align*}
and
\begin{multline}\label{Function C}
  \mathcal{C}_N(s)=\vol(X_N)(s-1/2)\tan(\pi(s-1/2))
  -\sum_{i=1}^{l}\sum_{j=1}^{m_i-1}\frac{\pi}{m_i\sin (\pi j/m_i)}
  \frac{\cos\pi(2j/m_i-1)(s-1/2)}{\cos\pi(s-1/2)} \\
  +2\log 2+\frac{\Gamma^{\prime }}{\Gamma}(1/2+s)
  +\frac{\Gamma^{\prime }}{\Gamma}(3/2-s).
\end{multline}
By Theorem 2.29 on p.~468 of \cite{Hej83}, we have the estimates
\begin{align}\label{R_1 bound}
  R_1(T)=O\left(\frac{T}{\log T}\right)\ \text{ and }
  \ \int\limits_0^T R_1(t)dt=O\left(\frac{T}{\log^2T}\right)
  \ \text{ as }\ T\to\infty.
\end{align}
To see that our function $R_{1}(T)$ is equal to the function $S(T)$ in
Theorem 2.29 of \cite{Hej83}, we refer to Definition 2.27 on page 465 of
\cite{Hej83}.
In addition, one can easily prove that $S_{1}(T)$, in the notation of
\cite{Hej83}, coincides with the integral of $R_{1}(T)$.
To do so, one simply integrates the formula for $R_{1}(T)$, interchanges the
order of integration, evaluates the inside integral, and then integrates by
parts.
We choose to omit the details of these elementary calculations.

Since $A\in (1,3/2)$, the function $\mathcal{C}_N(s)$ has no poles on the
sides of the rectangle $R_{1/2}(A)$ which has vertices at the points
$1/2-iT$, $A-iT$, $A+iT$ and $1/2+iT$.
Furthermore, the only pole of $\mathcal{C}_N(s)$ inside $R_{1/2}(A)$ is a
simple pole at $s=1$.
Since
\begin{align*}
  \lim_{s\to 1}\frac{s-1}{\cos\pi(s-1/2)}=-\frac{1}{\pi}
\end{align*}
we conclude that
\begin{align*}
  \res_{s=1} \mathcal{C}_N(s)=-\frac{\vol(X_N)}{2\pi}
  +\sum_{i=1}^{l}\sum_{j=1}^{m_i-1}
  \frac{\cos\pi(j/m_i-1/2)}{m_i\sin(\pi j/m_i)}
  =-\frac{\vol(X_N)}{2\pi}+\sum_{i=1}^{l}\left( 1-\frac{1}{m_i}\right)=1-2g.
\end{align*}
Therefore, by the calculus of residues, having in mind that
$\mathcal{C}_N(1/2+it)=\mathcal{C}_N(1/2-it)$, for real and non-negative $t$ we
get
\begin{align}\label{Integral of C_1}
  \frac{1}{4\pi i}\int\limits_{\partial P(A)} \mathcal{C}_N(s)ds
  =\frac{1}{2}(1-2g)+\frac{1}{2\pi}\int\limits_0^T\mathcal{C}_N(1/2+it)dt.
\end{align}
By substituting $s=1/2+it$ into \eqref{Function C}, we then have that
\begin{multline}\label{Integral of C_at 1/2}
  \int\limits_0^T \mathcal{C}_N (1/2+it)dt
  =-\vol(X_N)\int\limits_0^T t\tanh(\pi t)dt
  -\sum_{i=1}^{l}\frac{\pi}{m_i}\sum_{j=1}^{m_i-1}
  \frac{1}{\sin(\pi j/m_i)}\int\limits_0^T
  \frac{\cosh\pi(2j/m_i-1)t}{\cosh\pi t}dt \\
  +2 \Re\left(\int\limits_0^T
  \frac{\Gamma'}{\Gamma}(1+it)dt\right)+2 T\log 2
  =I_1 (T)-I_2(T)+I_3(T)+2T\log 2,
\end{multline}
where, in obvious notation, $I_{1}$, $I_{2}$ and $I_{3}$ are defined
to be the integrals in (\ref{Integral of C_at 1/2}).
We will now estimate each of these integrals.

We write $t\tanh(\pi t)=t-2t/(1+\exp(2\pi t))$ to get the expression
\begin{align*}
  I_1(T)=-\vol(X_N)\left(\frac{T^2}{2}-2\int\limits_0^{\infty}
  \frac{t dt}{1+e^{2\pi t}}+2g_1(T)\right),
\end{align*}
where
\begin{align}\label{g_1 def}
  g_1(T)=\int\limits_T^{\infty}\frac{t dt}{1+e^{2\pi t}}.
\end{align}
Quoting formula 3.411.3 from \cite{GR07} with $\nu=2$ and
$\mu=2\pi$, having in mind that $\zeta (2) = \pi^2 /6$ and $\Gamma(2)=1$,
we get
\begin{align}\label{I_1}
  I_1(T)=-\vol(X_N)\left(\frac{T^2}{2}-\frac{1}{24}+2 g_1(T)\right).
\end{align}

Similarly, by quoting formula 3.511.4 from \cite{GR07} with $a=\pi(2j/m_i-1)$
and $b=\pi$, we arrive at the equation
\begin{align*}
  \int\limits_0^T\frac{\cosh\pi(2j/m_i-1)t}{\cosh\pi t}dt
  =\frac{1}{2 \sin(\pi j/m_i)}-g_2(i,j,T)
\end{align*}
where
\begin{align*}
  g_2(i,j,T):=\int\limits_T^{\infty}
  \frac{\cosh\pi(2j/m_i-1)t}{\cosh\pi t}dt.
\end{align*}
Hence,
\begin{align}\label{I_2}
  I_2(T)=\sum_{i=1}^{l}\frac{\pi}{m_i}\sum_{j=1}^{m_i-1}
  \frac{1}{2\sin^2(\pi j/m_i)}-g_2(T),
\end{align}
where we define
\begin{align*}
  g_2(T):=\sum_{i=1}^{l}\frac{\pi}{m_i}\sum_{j=1}^{m_i-1}
  \frac{g_2(i,j,T)}{\sin(\pi j/m_i)}
\end{align*}

Finally, quoting formula 8.344 from \cite{GR07}, which is essentially
Stirling's formula, with $z=1+iT$ and $n=2$ we get that
\begin{align}\label{I_3}
  \frac{1}{2}I_3(T)=\Re\left(-i\int\limits_0^T(\log\Gamma(1+it))'dt
  \right)=\Im (\log\Gamma(1+iT))=-T+T\log T+\frac{\pi}{4}+g_3(T),
\end{align}
where
\begin{align}\label{g_3}
  g_3(T)=\frac{1}{2} \Im\left(\log\left( 1+\frac{1}{iT}\right)\right)
  +\Re\left( T\log\left( 1+\frac{1}{iT}\right)\right)
  -\frac{B_2 T}{2 (1+T^2)}+\Im ( \mathcal{R}_3(T))
\end{align}
and
\begin{align}\label{R_3}
  \vert \mathcal{R}_3(T)\vert\le
  \frac{\vert B_4\vert}{12(1+T^2)^{3/2}\cos^3(\frac{1}{2}\arg(1+iT))}\le
  \frac{\vert B_4\vert}{12\sqrt{2}T\cos^3(\pi/4 ) }=\frac{1}{180 T},
\end{align}
for $T\ge1$.
In the above computations, $B_2=1/6$ and $B_4=-1/30$ are Bernoulli numbers.

Substituting \eqref{I_1}, \eqref{I_2} and \eqref{I_3} into
\eqref{Integral of C_at 1/2}, and in turn using \eqref{Integral of C_1},
we get the expression
\begin{multline}\label{C_N final}
  \frac{1}{4\pi i}\int\limits_{\partial P(A)}\mathcal{C}_N(s)ds
  =\frac{1}{2}(1-2g)-\frac{\vol(X_N)}{2\pi}\left(\frac{T^2}{2}
  -\frac{1}{24}\right)-\sum_{i=1}^{l}\frac{1}{4m_i}\sum_{j=1}^{m_i-1}
  \frac{1}{\sin^2(\pi j/m_i)} \\
  +\frac{1}{\pi}\left(T\log T-T+\frac{\pi}{4}\right)
  +\frac{\log 2}{\pi}T-\frac{\vol(X_N)}{\pi} g_1(T)
  +\frac{1}{2\pi} g_2(T)+\frac{1}{\pi} g_3(T).
\end{multline}

Using the evaluation \eqref{DefScattViaXi} of the scattering determinant, we
immediately deduce that, inside the rectangle $R_{1/2}(A)$ the function
$\varphi_N (s)$ has a simple pole at $s=1$ and zeros at points $\rho$.
Therefore,
\begin{align}\label{int_phi}
  \frac{1}{4\pi i}\int\limits_{\partial P(A)}
  \frac{\varphi_N'}{\varphi_N}(s)ds
  =Q_N[0<\Im(\rho)\le T]-\frac{1}{2}+\frac{1}{4\pi}\int\limits_{-T}^T
  \frac{\varphi_N'}{\varphi_N}(1/2+it)dt.
\end{align}
Combining \eqref{int_phi} with \eqref{C_N final} and \eqref{sumzeros} yields,
for
$T\ge1$,
\begin{multline}\label{sumzeros 2}
  \mathcal{N}_N[0<r_n\le T]=R_1(T)+\frac{1}{4\pi}
  \int\limits_{-T}^T\frac{\varphi_N'}{ \varphi_N}(1/2+it)dt
  +\frac{\vol(X_N)}{4\pi} T^2-\frac{T\log T}{\pi}
  +\frac{T}{\pi} (1-\log 2) \\
  +\sum_{i=1}^{l}\frac{1}{4m_i}\sum_{j=1}^{m_i-1}
  \frac{1}{\sin^2(\pi j/m_i)}-\frac{\vol(X_N)}{48\pi}-\frac{1}{4}
  -\frac{n_N}{2}-m_{1/4, N}+\left(\frac{\vol(X_N)}{\pi} g_1(T)
  -\frac{1}{2\pi} g_2(T) -\frac{1}{\pi}g_3(T)\right).
\end{multline}

Taking logarithmic derivative of \eqref{DefScattViaXi}, we get
\begin{align}\label{Scat Integral}
  \frac{1}{4\pi}\int\limits_{-T}^T\frac{\varphi_N'}{ \varphi_N}(1/2+it)dt
  =\frac{1}{2\pi}\int\limits_0^T\frac{dt}{(1/4)+t^2}
  -\frac{1}{\pi} \Re\left(-i\int\limits_0^T (\log \xi (1+2it))' dt
  \right)+\frac{1}{4\pi}\int\limits_{-T}^T\frac{D_N'}{ D_N}(1/2+it)dt.
\end{align}
We now will compute the three integrals on the right-hand side of
\eqref{Scat Integral} separately.
First,
\begin{align}\label{J_1}
  \frac{1}{2\pi}\int\limits_0^T\frac{dt}{(1/4)+t^2}=\frac{1}{2}
  -\frac{1}{\pi}\arctan (1/T).
\end{align}
As for the second term on the right-hand side of \eqref{Scat Integral}, we
begin by writing
\begin{align*}
  -\frac{1}{\pi}\Re\left(-i\int\limits_0^T(\log\xi(1+2it))' dt
  \right)=-\frac{1}{\pi}\Im(\log\xi(1+2iT)-\log\xi(1)).
\end{align*}
From the definition of the function $\xi$, one has $\xi (1)=\xi (0)=1/2$, so
then
\begin{align}\label{J_2}
  -\frac{1}{\pi}\Re\left(-i\int\limits_0^T(\log\xi(1+2it))'dt
  \right)&=-\frac{1}{\pi}\Im(\log\xi(1+2iT)-\log\xi(1)) \\ \notag
  &=-1-\frac{T\log T}{\pi}+\frac{T}{\pi}(1+\log\pi)
  -\frac{1}{\pi}g_4(T)-\frac{1}{\pi}R_2(T),
\end{align}
where $R_2(T)=\Im (\log\zeta(1+2iT))=\Im (\log (2iT\zeta(1+2iT)))-\pi/2$.
From Stirling's formula, we have that
\begin{align}\label{g_4}
  g_4(T)=\Im\left(\log\left(1+\frac{1}{2iT}\right)\right)
  +\Re\left(T\log\left(1+\frac{1}{2iT}\right)\right)
  -\frac{2B_2 T}{(1+4T^2)}+\Im(\mathcal{R}_4(T)).
\end{align}
The error term $\mathcal{R}_4(T)$ satisfies the inequality
\begin{align}\label{R_4}
  \vert\mathcal{R}_4(T)\vert\le\frac{\vert B_4\vert}{12 (1/4+T^2)^{3/2}
    \cos^3(\frac{1}{2}\arg(1/2+iT))}\le\frac{2}{225 T},
\end{align}
for all $T\ge1$, which we have deduced in a manner similar to \eqref{R_3}.

As for the third integral in \eqref{Scat Integral}, we begin by noting
that the logarithmic derivative of the function $D_N$ is given by
\begin{align}\label{LogDer D_N}
  \frac{D_N'}{D_N}(s)=-\log N-\sum_{j=1}^r\frac{(p_j-1)\log p_j
    \cdot p_j^s}{(p_j^s+p_j)(p_j^s+1)}.
\end{align}
Furthermore, straightforward computations yield the formula
\begin{align}\label{Part D_N int}
  \frac{1}{2\pi}\int\limits_0^T\Re\left[
    \frac{(p_j-1)p_j^{1/2+it}}{(p_j^{1/2+it}+p_j)(p_j^{1/2+it}+1)}\right]dt
  =\frac{p_j-1}{2\pi}\frac{1}{\log p_j(p_j+1)}
  \int\limits_0^{T\log p_j}\frac{du}{1+a_j\cos u},
\end{align}
where
\begin{align*}
  a_j=2/(\sqrt{p_j}+1/\sqrt{p_j}) = 1/\cosh((1/2)\log(p_{j})).
\end{align*}
With these preliminary computations,
the third term on the right-hand side of \eqref{Scat Integral} can be
evaluated using \eqref{LogDer D_N} and \eqref{Part D_N int}, namely we have
the formula
\begin{align}\label{IntLogDer D_N}
  \frac{1}{4\pi}\int\limits_{-T}^T\frac{D_N'}{D_N}(1/2+it)dt
  =-\frac{\log N}{2\pi}T-\frac{1}{2\pi}\sum_{j=1}^r
  \frac{p_j-1}{p_j+1}\int\limits_0^{T\log p_j}\frac{du}{1+a_j\cos u}.
\end{align}
We write
\begin{align}\label{GRform}
  \int\limits_0^{T\log p_j}\frac{du}{1+a_j\cos u}
  =\sum_{k=0}^{\lfloor\frac{T\log p_j}{\pi}\rfloor-1}
  \int\limits_0^{\pi}\frac{du}{1+a_j\cos u}
  +\int\limits_0^{T\log p_j-\lfloor\frac{T\log p_j}{\pi}\rfloor\pi}
  \frac{du}{1+(-1)^{\lfloor\frac{T\log p_j}{\pi}\rfloor}a_j\cos u}
\end{align}
and use \cite{GR07}, formulas 3.613.1 with $n=0$, $a=a_j$ and 2.553.3 with
$a=1$, $b=(-1)^{\lfloor\frac{T\log p_j}{\pi}\rfloor}a_j$ (hence $b^2<a^2$) to
evaluate the two integrals in \eqref{GRform}.
Substituting \eqref{GRform} into \eqref{IntLogDer D_N}, and employing the
definition
$\alpha_N(j,T):=T\log p_j-\lfloor\frac{T\log p_j}{\pi} \rfloor\pi$,
we get the expression
\begin{align*}
  \frac{1}{4\pi}\int\limits_{-T}^T\frac{D_N'}{D_N}(1/2+it)dt
  =-\frac{\log N}{\pi}T+\frac{1}{2\pi}\sum_{j=1}^r\alpha_N(j,T)
  -\frac{1}{\pi}\sum_{j=1}^r\arctan\left(\left(
  \frac{\sqrt{p_j}-1}{\sqrt{p_j}+1}
  \right)^{(-1)^{\lfloor\frac{T\log p_j}{\pi}\rfloor}}
  \tan\left(\frac{\alpha_N(j,T)}{2}\right)\right).
\end{align*}
Now, by combining this last formula with \eqref{Scat Integral}, \eqref{J_1}
and \eqref{J_2}, we arrive at the expression
\begin{multline}\label{summary1}
  \frac{1}{4\pi}\int\limits_{-T}^T\frac{\varphi_N'}{\varphi_N}(1/2+it)dt
  =-\frac{1}{2}-\frac{T\log T}{\pi}+\frac{T}{\pi}(1+\log(\pi/N) )
  -\frac{1}{\pi}R_2(T)+\frac{1}{2\pi}\sum_{j=1}^r\alpha_N(j,T) \\
  -\frac{1}{\pi}\sum_{j=1}^r\arctan\left(\left(
  \frac{\sqrt{p_j}-1}{\sqrt{p_j}+1}
  \right)^{(-1)^{\lfloor\frac{T\log p_j}{\pi}\rfloor}}
  \tan\left(\frac{\alpha_N(j,T)}{2}\right)\right)
  -\frac{1}{\pi}\left(\arctan(1/T)+g_4(T)\right).
\end{multline}
Substituting \eqref{summary1} into \eqref{sumzeros 2}, we immediately see that
\begin{align*}
  \mathcal{N}_N[0<r_n\le T]-\M_N(T)=S_N(T),
\end{align*}
where
\begin{multline*}
  \M_N(T)=\frac{\vol(X_N)}{4\pi}T^2-\frac{2T\log T}{\pi}
  +\frac{T}{\pi}(2+\log(\pi/2N))
  +\sum_{i=1}^{l}\frac{1}{4m_i}\sum_{j=1}^{m_i-1}
  \frac{1}{\sin^2(\pi j/m_i)}-\frac{\vol(X_N)}{48\pi}-m_{1/4, N} \\
  -\frac{3}{4}-\frac{n_N}{2}+\frac{1}{2\pi}\sum_{j=1}^r\alpha_N(j,T)
  -\frac{1}{\pi}\sum_{j=1}^r\arctan\left(\left(
  \frac{\sqrt{p_j}-1}{\sqrt{p_j}+1}
  \right)^{(-1)^{\lfloor\frac{T\log p_j}{\pi}\rfloor}}
  \tan\left(\frac{\alpha_N(j,T)}{2}\right)\right)+G_N(T),
\end{multline*}
with
\begin{align*}G_N(T)=-\frac{1}{2\pi}\left(-2\vol(X_N)g_1(T)+g_2(T)
  +2g_3(T)+2g_4(T)+2\arctan(1/T)\right)
\end{align*}
and
\begin{align}\label{definition_of_S}
  S_N(T)=R_1(T)-\frac{1}{\pi}(\Im(\log(2iT\zeta(1+2iT)))-\pi/2).
\end{align}

At this time, it remains to derive bounds for the error terms $G_N(T)$ and
$S_N(T)$.
From the definition \eqref{g_1 def} of the function $g_1(T)$ we deduce that
\begin{align*}
  \vert g_1(T)\vert\le\int\limits_T^{\infty}t e^{-2\pi t}dt
  =\frac{e^{-2\pi T}}{2\pi}(T+\frac{1}{2\pi}).
\end{align*}
For an arbitrary positive constant $A>0$, the function
$f(x)=x^2\exp (A-Ax)$ is decreasing for $x > 2/A$; hence, if
$A>2$, then $f(x)\le f(1)=1$ for all $x\ge1$.
Therefore, for $A>2$, one gets $\exp(-Ax)\le\exp(-A)x^{-2}$ for all
$x\ge1$.
Taking $A=2\pi>2$, we obtain the bound
\begin{align}\label{g_1 bound}
  \vert g_1(T)\vert\le\frac{e^{-2\pi T}}{2\pi}(T+\frac{1}{2\pi})\le
  \frac{2\pi+1}{4\pi^2\exp(2\pi)}\cdot\frac{1}{T},
\end{align}
for all $T\ge1$.
Since $u\exp(1-u)\le1$ for all $u>0$, we get the inequalities
\begin{equation}
  \vert g_2(i,j,T)\vert \le\frac{2}{\pi}\int\limits_{\pi T}^{\infty}
  \exp(\left(\vert(2j/m_i)-1\vert-1\right)u)du=\frac{2}{\pi}
  \frac{\exp(\left(\vert(2j/m_i)-1\vert-1\right)\pi T)}{\left(1
    -\vert(2j/m_i)-1\vert\right)}
  \le\frac{2}{\left(1-\vert(2j/m_i)-1\vert\right)^2\pi^2 e}
  \cdot\frac{1}{T}.\notag
\end{equation}
For $j\in\{1,\ldots,m_i-1\}$ one has $1-\vert(2j/m_i)-1\vert\ge2/m_i$,
hence
\begin{equation}\label{g_2 bound}
  \vert g_2(T)\vert\le\sum_{i=1}^{l}\frac{1}{m_i}\sum_{j=1}^{m_i-1}
  \frac{2}{e\cdot\sin(\pi j/m_i)\left(1-\vert(2j/m_i)-1\vert
    \right)^2\pi T}\le\sum_{i=1}^{l}\frac{m_i}{2e\pi}
  \sum_{j=1}^{m_i-1}\frac{1}{\sin(\pi j/m_i)}\cdot\frac{1}{T},
\end{equation}
for all $T>1$.

In order to obtain bounds for $g_3$ and $g_4$, we need to estimate
$\Im\left(\log\left(1+\frac{a}{iT}\right)\right)$ and
$\Re\left(T\log\left(1+\frac{a}{iT}\right)\right)$ for $a=1$ and
$a=1/2$.
When $T>1$ one has $\vert a/iT\vert<1$, so then
\begin{align*}
  \log\left(1+\frac{a}{iT}\right)=\sum_{k=1}^{\infty}
  \frac{(-1)^{k-1}}{k}\left(\frac{a}{iT}\right)^{k}.
\end{align*}
Therefore,
\begin{align*}
  \left|\Im\left(\log\left(1+\frac{a}{iT}\right)\right)\right|
  =\left|\sum_{k=1}^{\infty}\frac{(-1)^{k}}{2k-1}\left(
  \frac{a}{T}\right)^{2k-1}\right|\le\frac{a}{T}.
\end{align*}
Similarly,
\begin{align*}
  \left|\Re\left(T\log\left(1+\frac{a}{iT}\right)\right)\right|
  =\left|T\sum_{k=1}^{\infty}\frac{(-1)^{k-1}}{2k}
  \left(\frac{a}{T}\right)^{2k}\right|\le\frac{a^2}{2T}.
\end{align*}
Now, from \eqref{g_3}, \eqref{R_3}, \eqref{g_4} and \eqref{R_4} we conclude
that for $T>1$
\begin{align}\label{g_3 bound}
  \vert g_3(T)\vert\le\frac{1}{2T}+\frac{1}{2T}+\frac{1}{12T}+\frac{1}{180T}
  =\frac{49}{45T}
\end{align}
and
\begin{align}\label{g_4 bound}
  \vert g_4(T)\vert\le\frac{1}{2T}+\frac{1}{8T}+\frac{1}{12T}
  +\frac{2}{225T}=\frac{1291}{1800T}
\end{align}

Finally, for $T\ge1$ one has $\arctan (1/T)\le1/T$, hence substituting
\eqref{g_1 bound}, \eqref{g_2 bound}, \eqref{g_3 bound} and
\eqref{g_4 bound} into the definition of $G_N(T)$, we arrive at
\begin{align*}
  \vert G_N(T)\vert
  \le\frac{1}{2\pi T}\left(\frac{\vol(X_N)(2\pi+1)}{2\pi^2\exp(2\pi)}
  +\sum_{i=1}^{l}\frac{m_i}{2e\pi}
  \sum_{j=1}^{m_i-1}\frac{1}{\sin(\pi j/m_i)}+\frac{5051}{900}\right),
\end{align*}
which is the inequality stated in \eqref{G_N bound}.

By \eqref{R_1 bound}, the proof of the theorem will be complete once we
show that
\begin{align*}
  \int\limits_0^T(\Im(\log(2it\zeta(1+2it)))-\pi/2)dt
  =O\left(\frac{T}{\log^2 T}\right)\ \text{ as }\ T\to\infty.
\end{align*}
In fact, we will prove the stronger bound
\begin{align}\label{errorbound}
  \int\limits_0^T(\Im(\log(2it\zeta(1+2it)))-\pi/2)dt=O(\log T)
  \ \text{ as }\ T\to\infty.
\end{align}
By the changes of variables $s = 1 + 2it$, we can write
\begin{align*}
  \int\limits_0^T\log(2it\zeta(1+2it))dt
  =\frac{1}{2i}\int\limits_{1}^{1+2iT}\log((s-1)\zeta(s))ds.
\end{align*}
The function $\log((s-1)\zeta(s))$ is holomorphic in the closed rectangle with
vertices $1$, $A$, $A+2iT$ and $1+2iT$, so, by Cauchy's theorem, we have that
\begin{multline}\label{zetaintegral}
  \int\limits_0^T\log(2it\zeta(1+2it)) dt
  =\frac{1}{2i}\int\limits_{1}^{A}\log((\sigma-1)
  \zeta(\sigma))d\sigma+\int\limits_0^T\log((A-1+2it)\zeta(A+2it))dt \\
  +\frac{1}{2i}\int\limits_{A}^{1}\log((\sigma+2iT-1)
  \zeta(\sigma+2iT))d\sigma
  =O(1)+J_1(T)+J_2(T)\ \text{ as }\ T\to\infty.
\end{multline}

It remains to estimate $\Im (J_1 (T))$ and $\Im (J_2 (T))$.
Trivially, one has
\begin{align*}
  \Im (J_1(T))=\Im\left(\int\limits_0^T\log\left(2it\left(
  1+\frac{A-1}{2it}\right)\right)\right)
  +\Im\left(\int\limits_0^T\log\zeta(A+2it)\right).
\end{align*}
It is elementary to show that
$\Im\left(\log\left(2it\left( 1+\frac{A-1}{2it}\right)\right)\right)
=\pi/2+O(t^{-2})$ for $t\gg1$.
The Dirichlet series representation of $\log\zeta(A+2it)$,
is absolutely and uniformly convergent in the range under consideration since
$A>1$.
Therefore, we get the bounds
\begin{align}\label{J1_bound}
  \Im (J_1(T))=\frac{\pi}{2}T+O(1) + \sum_{n=1}^{\infty}
  \frac{\Lambda(n)}{n^{A}\log^2 n} \Im\left(\frac{1-n^{-2iT}}{2i}\right)
  =\frac{\pi}{2}T+O(1)\ \text{ as }\ T\to\infty,
\end{align}
where $\Lambda(n)$ is the von Mangoldt function.
Combining \eqref{J1_bound} with \eqref{zetaintegral}, we have that
\begin{align*}
  \int\limits_0^T(\Im(\log(2it\zeta(1+2it)))-\pi/2)dt
  =\Im (J_2(T))+O(1)\ \text{ as }\ T\to\infty.
\end{align*}
In order to prove \eqref{errorbound}, we need to show that
$\Im (J_2(T))=O(\log T)$ as $T\to\infty$.
The proof of this bound is straightforward.
Simply combine the elementary bound $\log(\sigma+2iT-1)=O(\log T)$ together
with the estimate $\log\zeta(\sigma+2iT)=O(\log T)$ which holds uniformly for
$\sigma\in[1,A]$, which we quote from Theorem 3.5 in \cite{Ti86}.

With all this, the proof of Theorem \ref{AverageWeylLaw} is complete.

\begin{remark}\rm
  In order to prove Corollary \ref{ClassicalWeylLaw}, one follows the
  analysis above up to equation \eqref{definition_of_S}.
  At that point, one uses the first part of Theorem 2.29 on page 468 of
  \cite{Hej83} to bound the first term and Theorem 3.5 in \cite{Ti86} to
  bound the second term.
\end{remark}

We now state three special cases of Theorem \ref{AverageWeylLaw}; first when
$\Gamma = \PSL(2,\Z)$, next when $\Gamma = \Gamma_0(5)^+$, and finally when
$\Gamma = \Gamma_0(6)^+$.

\begin{corollary}[Average Weyl's law for $\PSL(2, \Z)$]
  \begin{align*}
    \mathcal{N}_1[0<r_n\le T]-\M_1(T)=S_1(T),
  \end{align*}
  where
  \begin{align*}
    \M_1(T)=\frac{1}{12} T^2-\frac{2T\log T}{\pi}
    +\frac{T}{\pi}(2+\log(\pi/2))-\frac{131}{144}+G_1(T),
  \end{align*}
  with
  \begin{align*}
    \vert G_1(T)\vert\le\frac{1}{2\pi}\left(
    \frac{2\pi+1+6(1+2\sqrt{3})\exp(2\pi-1)}{6\pi\exp(2\pi)}
    +\frac{5051}{900}\right)\frac{1}{T}<\frac{1}{T}
  \end{align*}
  and
  \begin{align*}
    \int\limits_0^T S_1(t)dt=O\left(\frac{T}{\log^2T}\right),
    \ \text{ as }\ T\to\infty.
  \end{align*}
\end{corollary}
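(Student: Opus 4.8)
The plan is to derive this corollary as the special case $N=1$ of Theorem~\ref{AverageWeylLaw}, since $\Gamma_0(1)^+=\PSL(2,\Z)$. The first step is to record the geometric invariants of the modular surface $X_1=\PSL(2,\Z)\backslash\h$: it has genus $g=0$, exactly one cusp, and precisely two inequivalent elliptic fixed points, of orders $m_1=2$ and $m_2=3$, so that $l=2$ and its signature is $(0;2,3;1)$; by the Gauss--Bonnet theorem $\vol(X_1)=\pi/3$. Moreover $N=1$ is an empty product, i.e.\ $r=0$, so every sum $\sum_{j=1}^{r}$ occurring in the statement of Theorem~\ref{AverageWeylLaw} and in the bound \eqref{G_N bound} is empty and disappears, and $\log(\pi/2N)=\log(\pi/2)$.

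Next I would input the spectral data for $X_1$, which is classical: the only small eigenvalue of $-\Delta$ on $X_1$ is $\lambda_0=0$, the first Maass cusp form having eigenvalue $\approx 91.14$, much larger than $1/4$; hence $n_1=1$, and since $\lambda=1/4$ is not an eigenvalue, $m_{1/4,1}=0$. Substituting $g=0$, $l=2$, $m_1=2$, $m_2=3$, $\vol(X_1)=\pi/3$, $n_1=1$ and $m_{1/4,1}=0$ into the formula for $\M_N(T)$ in Theorem~\ref{AverageWeylLaw} produces the leading part $\tfrac{1}{12}T^2-\tfrac{2T\log T}{\pi}+\tfrac{T}{\pi}\bigl(2+\log(\pi/2)\bigr)$, while the constant term is obtained by evaluating the elliptic sum
\begin{align*}
  \sum_{i=1}^{2}\frac{1}{4m_i}\sum_{j=1}^{m_i-1}\frac{1}{\sin^2(\pi j/m_i)}
  =\frac{1}{8}\cdot 1+\frac{1}{12}\cdot\frac{8}{3}
  =\frac{1}{8}+\frac{2}{9}=\frac{25}{72},
\end{align*}
and combining it with $-\vol(X_1)/(48\pi)=-1/144$, with $-3/4$, and with $-n_1/2=-1/2$; over the common denominator $144$ this yields $\tfrac{25}{72}-\tfrac{1}{144}-\tfrac34-\tfrac12=-\tfrac{131}{144}$, as asserted.

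The last step is to specialize the error bound \eqref{G_N bound}. There the elliptic sum becomes
\begin{align*}
  \sum_{i=1}^{2}\frac{m_i}{2e\pi}\sum_{j=1}^{m_i-1}\frac{1}{\sin(\pi j/m_i)}
  =\frac{1}{e\pi}+\frac{2\sqrt{3}}{e\pi}=\frac{1+2\sqrt{3}}{e\pi},
\end{align*}
and $\vol(X_1)(2\pi+1)/(2\pi^2\exp(2\pi))=(2\pi+1)/(6\pi\exp(2\pi))$; putting both terms over the common denominator $6\pi\exp(2\pi)$ gives exactly the bracketed quantity $\bigl(2\pi+1+6(1+2\sqrt3)\exp(2\pi-1)\bigr)/\bigl(6\pi\exp(2\pi)\bigr)+5051/900$ appearing in the statement, and a direct numerical estimate shows this quantity is smaller than $2\pi$, whence $|G_1(T)|<1/T$ for $T>1$. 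The estimate $\int_0^T S_1(t)\,dt=O(T/\log^2 T)$ is inherited verbatim from Theorem~\ref{AverageWeylLaw}, since $S_1(T)$ is precisely the error term $S_N(T)$ of that theorem in the case $N=1$.

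Since every ingredient is either a direct specialization of Theorem~\ref{AverageWeylLaw} or an elementary trigonometric and arithmetic computation, I expect no substantial obstacle; the only point requiring genuine care is the correct quotation of the known spectral theory of the modular surface, namely that $n_1=1$ and $m_{1/4,1}=0$, together with the (routine but error-prone) bookkeeping of the constant term.
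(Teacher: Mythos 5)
Your proposal is correct and follows exactly the paper's route: the corollary is obtained by specializing Theorem \ref{AverageWeylLaw} to $N=1$, using the signature $(0;2,3;1)$, $\vol(X_1)=\pi/3$, $D_1(s)\equiv 1$ (so the empty products with $r=0$ drop out), and the facts $n_1=1$, $m_{1/4,1}=0$. Your explicit evaluation of the elliptic sums, the constant $-131/144$, and the bound on $G_1(T)$ all check out and merely fill in arithmetic the paper leaves implicit.
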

\begin{proof}
  We apply Theorem \ref{AverageWeylLaw} with $N=1$.
  In this case, $D_1(s) \equiv 1$ and the signature of the group is
  $(0; 2,3;1)$.
  Furthermore, $\lambda_1 >1/4$, by Theorem 11.4 from
  \cite{Iw02}, hence $n_1 =1$ and $m_{1/4, N}=0$.
\end{proof}

\begin{remark}\rm
  We have been informed that in \cite{Boo12}, the authors prove an
  average Weyl's law for $\SL(2,\Z)$ together with effective bounds for the
  integral of $S_{1}$, using a trace formula approach.
\end{remark}

\begin{corollary}[Average Weyl's law for $\Gamma_0(5)^+$]
  Let $\alpha_5(T)=T\log 5-\left\lfloor\frac{T\log 5}{\pi}\right\rfloor\pi$.
  Then,
  \begin{align*}
    \mathcal{N}_5[0<r_n\le T]-\M_5(T)=S_5(T),
  \end{align*}
  where
  \begin{multline*}
    \M_5(T)=\frac{T^2}{4}-\frac{2T\log T}{\pi}
    +\frac{T}{\pi} (2+\log\left(\frac{\pi}{10}\right) )-\frac{43}{48}
    +\frac{\alpha_5(T)}{2\pi} \\
    -\frac{1}{\pi}\arctan\left(\left(\frac{\sqrt{5}-1}{\sqrt{5}+1}
    \right)^{(-1)^{\lfloor\frac{T\log 5}{\pi}\rfloor}}
    \tan\left(\frac{\alpha_5(T)}{2}\right)\right)+G_5(T),
  \end{multline*}
  with
  \begin{align*}
    \vert G_5(T)\vert\le\frac{1}{2\pi}\left(
    \frac{2\pi+1+6\exp(2\pi-1)}{2\pi\exp(2\pi)}+\frac{5051}{900}\right)
    \frac{1}{T}<\frac{1}{T}
  \end{align*}
  and
  \begin{align*}
    \int\limits_0^T S_5(t)dt=O\left(\frac{T}{\log^2T}\right),
    \ \text{ as }\ T\to\infty.
  \end{align*}
\end{corollary}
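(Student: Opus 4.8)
The plan is to obtain this corollary as a direct specialization of Theorem \ref{AverageWeylLaw} to $N=5$, so the work consists of assembling the group-theoretic data for $\Gamma_0(5)^+$ and then carrying out the arithmetic simplification of the general formula.

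First I would record the needed invariants. Since $5$ is prime we have $r=1$ and $p_1=5$, so the sums over $j$ in $\M_N(T)$ and in \eqref{G_N bound} each collapse to a single term and $\alpha_5(j,T)$ becomes the function $\alpha_5(T)=T\log 5-\lfloor (T\log 5)/\pi\rfloor\pi$ appearing in the statement. From the discussion of $\Gamma_0(5)^+$ in Section \ref{prelim}, the signature is $(0;2,2,2;1)$, i.e.\ $g=0$ and $l=3$ with all $m_i=2$, while the Gauss--Bonnet theorem gives $\vol(X_5)=\pi$. The one remaining input is the count of small eigenvalues: I would argue that $n_5=1$ and $m_{1/4,5}=0$, i.e.\ that $\lambda=0$ is the only eigenvalue of $-\Delta$ on $X_5$ in $[0,1/4]$. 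This can be justified either by combining Lemma \ref{MaassFormsLemma} with the absence of exceptional eigenvalues at level $5$, or by appealing to the numerically computed and Turing-verified eigenvalue list of Section \ref{numerics}, which exhibits the smallest positive eigenvalue of $X_5$ above $1/4$. This is the exact analogue of the appeal to Theorem 11.4 of \cite{Iw02} in the $\PSL(2,\Z)$ corollary.

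With these values in hand the rest is substitution and bookkeeping. Plugging $\vol(X_5)=\pi$ into the leading coefficient gives $T^2/4$, and into $-\vol(X_N)/(48\pi)$ gives $-1/48$; the elliptic contribution $\sum_{i=1}^{l}\frac{1}{4m_i}\sum_{j=1}^{m_i-1}\frac{1}{\sin^2(\pi j/m_i)}$ becomes $3\cdot\frac18\cdot\frac{1}{\sin^2(\pi/2)}=\frac38$, and combining $\frac38-\frac1{48}-\frac34-\frac{n_5}{2}=\frac38-\frac1{48}-\frac34-\frac12=-\frac{43}{48}$ yields the stated constant. The $T\log T$, $T$, $\alpha_5$ and $\arctan$ terms carry over verbatim with $N=5$, $p_1=5$. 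For the error bound I would substitute $\vol(X_5)=\pi$, $l=3$, $m_i=2$ into \eqref{G_N bound}: the first term becomes $\frac{2\pi+1}{2\pi\exp(2\pi)}$ and the elliptic sum becomes $\frac{3}{e\pi}$, and rewriting $\frac{3}{e\pi}=\frac{6\exp(2\pi-1)}{2\pi\exp(2\pi)}$ produces the displayed numerator $2\pi+1+6\exp(2\pi-1)$; a one-line numerical check using $\exp(2\pi)>535$ then shows the bracketed constant is smaller than $2\pi$, so $|G_5(T)|<1/T$. Finally, the bound on $\int_0^T S_5(t)\,dt$ is precisely the $N=5$ instance of the estimate on $\int_0^T S_N(t)\,dt$ in Theorem \ref{AverageWeylLaw}, so no further argument is needed.

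The only point requiring input beyond mechanical substitution is the determination that $n_5=1$ and $m_{1/4,5}=0$; everything else is forced by Theorem \ref{AverageWeylLaw} together with the already-established signature and volume of $\Gamma_0(5)^+$. I therefore expect that ruling out exceptional eigenvalues for $X_5$ to be the only step where one must invoke something outside the present computation, and even that is a mild obstacle given the available numerical certification.
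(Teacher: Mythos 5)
Your proposal is correct and follows essentially the same route as the paper: specialize Theorem \ref{AverageWeylLaw} to $N=5$ using the signature $(0;2,2,2;1)$, $\vol(X_5)=\pi$, and the determination $n_5=1$, $m_{1/4,5}=0$, which the paper settles exactly as in your first (rigorous) option, via Lemma \ref{MaassFormsLemma} together with the known absence of small eigenvalues for $\Gamma_0(5)$ (Corollary 11.5 of \cite{Iw02}); your arithmetic for the constant $-\tfrac{43}{48}$ and the $G_5$ bound matches the paper. Only your fallback appeal to the Turing-verified numerical list should be dropped, since it is not a proof, but you do not actually need it.
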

\begin{proof}
  We apply Theorem \ref{AverageWeylLaw} with $N=5$.
  In this case, the signature of the group is
  $(0; 2,2,2 ;1)$.
  Also, the only eigenvalue $\le1/4$ is $\lambda_0=0$,
  by Lemma \ref{MaassFormsLemma} and Corollary 11.5 from \cite{Iw02}.
\end{proof}

\begin{corollary}[Average Weyl's law for $\Gamma_0(6)^+$]
  Let $\alpha_6(T)=T\log 6-\left\lfloor\frac{T\log 2}{\pi}\right\rfloor
  \pi-\left\lfloor\frac{T\log 3}{\pi}\right\rfloor\pi$.
  Then,
  \begin{align*}
    \mathcal{N}_6[0<r_n\le T]-\M_6(T)=S_6(T),
  \end{align*}
  where
  \begin{multline*}
    \M_6(T)=\frac{T^2}{4}-\frac{2T\log T}{\pi}
    +\frac{T}{\pi} (2+\log\left(\frac{\pi}{12}\right) )-\frac{43}{48}
    +\frac{\alpha_6(T)}{2\pi}+G_6(T) \\
    \begin{aligned}
      &-\frac{1}{\pi}\arctan\left(\left(\frac{\sqrt{2}-1}{\sqrt{2}+1}
      \right)^{(-1)^{\lfloor\frac{T\log 2}{\pi}\rfloor}}
      \tan\frac{1}{2}\left( T\log 2
      -\left\lfloor\frac{T\log 2}{\pi}\right\rfloor\pi\right)\right) \\
      &-\frac{1}{\pi}\arctan\left(\left(\frac{\sqrt{3}-1}{\sqrt{3}+1}
      \right)^{(-1)^{\lfloor\frac{T\log 3}{\pi}\rfloor}}
      \tan\frac{1}{2}\left( T\log3
      -\left\lfloor\frac{T\log 3}{\pi}\right\rfloor\pi\right)\right),
    \end{aligned}
  \end{multline*}
  with
  \begin{align*}
    \vert G_6(T)\vert\le\frac{1}{2\pi}\left(
    \frac{2\pi+1+6\exp(2\pi-1)}{2\pi\exp(2\pi)}+\frac{5051}{900}\right)
    \frac{1}{T}<\frac{1}{T}
  \end{align*}
  and
  \begin{align*}
    \int\limits_0^T S_6(t)dt=O\left(\frac{T}{\log^2 T}\right),
    \ \text{ as }\ T\to\infty.
  \end{align*}
\end{corollary}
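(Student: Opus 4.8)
The plan is to derive this corollary as the special case $N=6=2\cdot 3$ of Theorem~\ref{AverageWeylLaw}, so that $r=2$, $p_1=2$, $p_2=3$, and then to carry out the elementary simplifications particular to this group.

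First I would assemble the geometric data for $\Gamma_0(6)^+$ established in Section~\ref{prelim}: the signature is $(0;2,2,2;1)$, hence $g=0$, $l=3$ and $m_1=m_2=m_3=2$, while the Gauss--Bonnet theorem gives $\vol(X_6)=\pi$. Next I would determine the small eigenvalues. Because $\Gamma_0(6)\subseteq\Gamma_0(6)^+$, Lemma~\ref{MaassFormsLemma} shows that every Maass form on $X_6$ is a Maass form on the congruence group $\Gamma_0(6)$; invoking Corollary~11.5 of \cite{Iw02}, which for $\Gamma_0(6)$ excludes eigenvalues of $-\Delta$ in the interval $(0,1/4]$, one concludes that the only eigenvalue of $-\Delta$ on $X_6$ not exceeding $1/4$ is $\lambda_0=0$. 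Therefore $n_6=1$ and $m_{1/4,6}=0$.

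It then remains to substitute $\vol(X_6)=\pi$, $g=0$, $l=3$, $m_i=2$, $n_6=1$, $m_{1/4,6}=0$, $p_1=2$, $p_2=3$ into the formula for $\M_N(T)$ and into the bound \eqref{G_N bound}. The leading term becomes $\frac{\pi}{4\pi}T^2=\frac{T^2}{4}$, the $\log$-term is unchanged, and $2+\log(\pi/2N)=2+\log(\pi/12)$. Since each $m_i=2$ forces $\sum_{j=1}^{m_i-1}\frac{1}{\sin^2(\pi j/m_i)}=\frac{1}{\sin^2(\pi/2)}=1$, the elliptic sum equals $3\cdot\frac18=\frac38$; combined with $-\frac{\vol(X_6)}{48\pi}=-\frac{1}{48}$, $-\frac34$, $-\frac{n_6}{2}=-\frac12$ and $m_{1/4,6}=0$, the constant term collapses to $-\frac{43}{48}$. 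For the $p$-adic contributions, the identity $\log 2+\log 3=\log 6$ lets one combine $\sum_{j=1}^{2}\alpha_6(j,T)$ into the quantity $\alpha_6(T)$ defined in the statement, so that $\frac{1}{2\pi}\sum_{j}\alpha_6(j,T)=\frac{\alpha_6(T)}{2\pi}$, and the remaining $\arctan$ sum is simply written out as the two displayed terms with $p=2$ and $p=3$. Finally, feeding $\vol(X_6)=\pi$ and $m_i=2$ into \eqref{G_N bound} gives $\sum_{i=1}^{3}\frac{m_i}{2e\pi}\cdot\frac{1}{\sin(\pi/2)}=\frac{3}{e\pi}=\frac{6\exp(2\pi-1)}{2\pi\exp(2\pi)}$, which together with $\frac{\vol(X_6)(2\pi+1)}{2\pi^2\exp(2\pi)}=\frac{2\pi+1}{2\pi\exp(2\pi)}$ produces the stated bound on $G_6(T)$; the strict inequality $|G_6(T)|<1/T$ then follows from a crude numerical estimate, using $\exp(2\pi)>500$ and $\frac{5051}{900}<5.62$, which shows the bracketed prefactor (after multiplication by $\frac{1}{2\pi}$) is less than $1$. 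The estimate for $\int_0^T S_6(t)\,dt$ is quoted verbatim from Theorem~\ref{AverageWeylLaw}.

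The only step that is not pure bookkeeping is the claim that $X_6$ has no small eigenvalue besides $\lambda_0=0$, equivalently $n_6=1$ and $m_{1/4,6}=0$; but via Lemma~\ref{MaassFormsLemma} this is reduced to the spectral gap statement for the congruence group $\Gamma_0(6)$ recorded in Corollary~11.5 of \cite{Iw02}, so it presents no real obstacle beyond citing the appropriate result. Everything else is a direct specialization of Theorem~\ref{AverageWeylLaw}.
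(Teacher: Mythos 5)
Your proposal is correct and follows exactly the route the paper intends: the paper's own proof is just the one-line remark that the corollary is a straightforward specialization of Theorem \ref{AverageWeylLaw} using basic properties of $\Gamma_0(6)^+$, and your substitutions (signature $(0;2,2,2;1)$, $\vol(X_6)=\pi$, $n_6=1$, $m_{1/4,6}=0$ via Lemma \ref{MaassFormsLemma} and Corollary 11.5 of \cite{Iw02}, exactly as the paper does for $\Gamma_0(5)^+$) and arithmetic all check out, including the collapse of the constants to $-\tfrac{43}{48}$ and the bound on $G_6(T)$.
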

\begin{proof}
  The proof is a straightforward corollary of Theorem \ref{AverageWeylLaw} and
  basic properties of $\Gamma_0(6)^+$.
\end{proof}

\section{Numerical computations}\label{numerics}

In this section we present numerical results on computations and statistical
distribution of large sets of consecutive eigenvalues of Maass cusp forms on
$X_5$ and $X_6$.

\subsection{Computation of consecutive list of eigenvalues of Maass forms on
  $X_5$ and $X_6$}

A systematic search \cite{The12} for Maass cusp forms on $\Gamma_0(5)^+$ in
the interval $0<\lambda<125^2+1/4$ and on $\Gamma_0(6)^+$ in the interval
$0<\lambda<230^2+1/4$ results in $3557$ and $12474$ Maass forms,
respectively.
A few eigenvalues are listed in Table \ref{eigenvalues}.
At some point, the entire list of eigenvalues will be made publicly available.
Prior to that time, the list will be made available to anyone upon request.

We note that the lowest point of the fundamental domain of the surafce $X_6$
has a larger imaginary part than that for $X_5$.
The height $y$ of the lowest point has an influence on how many terms are
to be considered in the Fourier expansion \eqref{TruncatedExp}.
This is the reason, why the computations were much faster on $X_6$ than on
$X_5$.

\begin{table}
  \begin{tabular}{c|cc}
    $n$ &
    $\lambda_n$ for $\Gamma_0(5)^+$ & $\lambda_n$ for $\Gamma_0(6)^+$ \\
    \hline \\
    1 & 17.32676 & 20.93844 \\
    2 & 24.23291 & 26.24717 \\
    3 & 36.89998 & 37.71537 \\
    4 & 40.58784 & 40.01593 \\
    5 & 46.81219 & 52.39092 \\
    \vdots & \vdots & \vdots \\
    3555 & 15623.315 & 15649.988 \\
    3556 & 15623.860 & 15654.937 \\
    3557 & 15625.094 & 15665.201 \\
    \vdots & \vdots & \vdots \\
    12470 & & 52875.046 \\
    12471 & & 52876.076 \\
    12472 & & 52879.257 \\
    12473 & & 52894.324 \\
    12474 & & 52899.011 \\
    \vdots & & \vdots \\
  \end{tabular}
  \caption{\label{eigenvalues} Eigenvalues of the
    Maass cusp forms on $\Gamma_0(5)^+$ in the interval $0<\lambda<125^2+1/4$
    and on $\Gamma_0(6)^+$ in the interval $0<\lambda<230^2+1/4$.}
\end{table}

The algorithm for computing eigenvalues is described in detail in
\cite{The12}.
The main ingredients are the following.
First, using a set of trial values $\tilde\lambda_1,\ldots,\tilde\lambda_\nu$,
we linearize Hejhal's system of equations \eqref{Hejhal} in the eigenvalue
$\lambda$ around each trial value $\tilde\lambda$.
For each $\tilde\lambda$, we obtain a matrix eigenvalue equation which
is then solved numerically.
In this step, the eigenvalues $\lambda$ of Maass cusp forms are related to
the matrix eigenvalues via perturbation theory.
As a result, one obtains a preliminary list of potential eigenvalues
of Maass cusp forms.
For each potential eigenvalue, we solve \eqref{Hejhal} for $0\not=|m|\le M_0$
and check whether the corresponding non-trivial solution is indeed a Maass
cusp form.

The check is described in section \ref{HejhalAlgorithm}.
This results in a verified list of Maass cusp forms.
Finally, we need to check and verify that the list of Maass cusp forms is
consecutive.
As stated, this check is performed using ''average'' Weyl's law and Turing's
method.
If it turns out that eigenvalues are missing, we search for them, using
additional trial values $\tilde\lambda$, until our list of Maass cusp forms
becomes consecutive, as indicated by Turing's method.

We do not have rigorous Turing bounds, yet.
Therefore, we use Turing's method heuristically.
In light of the data obtained, and presented in various figures in this
section, let us again discuss Turing's method, this time keeping the
figures in mind.

Let $\mathcal{N}^{\text{num}}_N(T)$ count the number of numerically found
eigenvalues in the interval $1/4<\lambda\le T^2+1/4$.
The difference between the number of numerically found eigenvalues and the
average Weyl's law
\begin{align*}
  S^{\text{num}}_N(T):=\mathcal{N}^{\text{num}}_N(T)-\M_N(T)
\end{align*}
is a fluctuating function.
Its mean comes close to a non-positive integer whose absolute value counts the
number of solutions which have been overlooked.

\begin{figure}
  \includegraphics{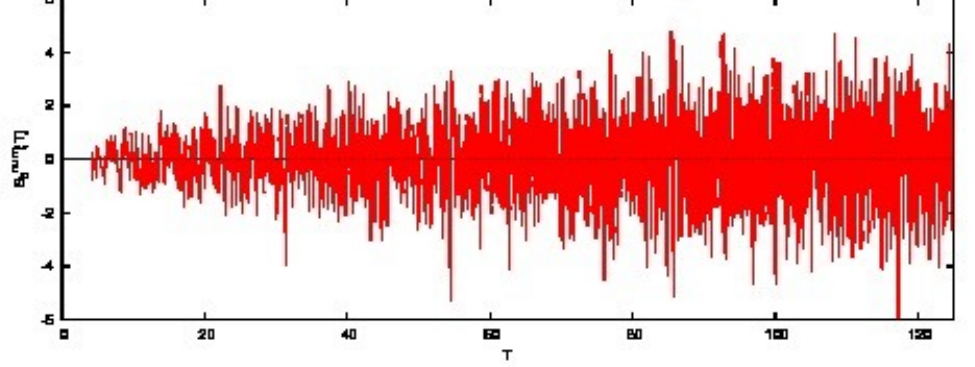}
  \caption{\label{S5}The fluctuations $S^{\text{num}}_5(T)$ for $\Gamma_0(5)^+$.}
\end{figure}

\begin{figure}
  \includegraphics{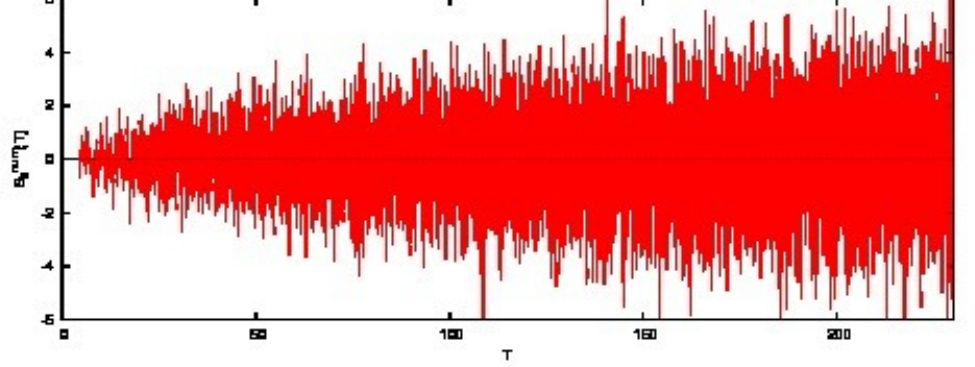}
  \caption{\label{S6}The fluctuations $S^{\text{num}}_6(T)$ for $\Gamma_0(6)^+$.}
\end{figure}

\begin{figure}
  \includegraphics{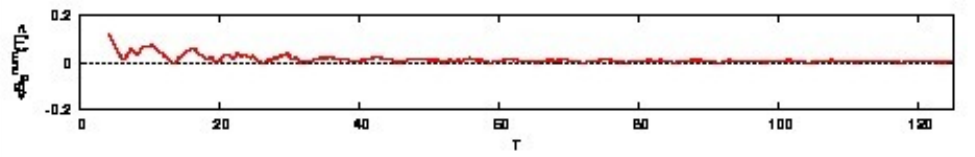}
  \caption{\label{S5m}The mean $\langle S^{\text{num}}_5(T)\rangle$.}
\end{figure}

\begin{figure}
  \includegraphics{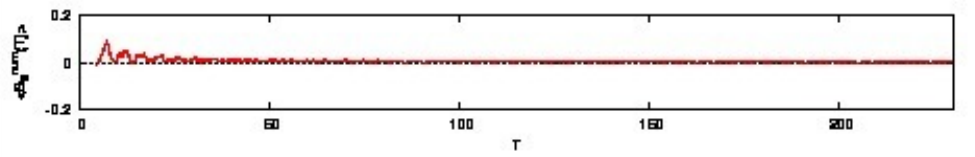}
  \caption{\label{S6m}The mean $\langle S^{\text{num}}_6(T)\rangle$.}
\end{figure}

\begin{figure}
  \includegraphics{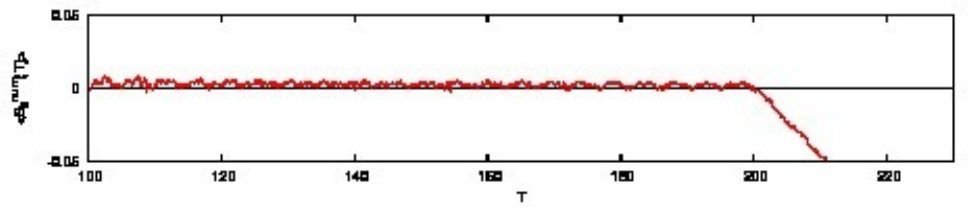}
  \caption{\label{S6m_miss}Mean $\langle S^{\text{num}}_6(T)\rangle$, with the
    eigenvalue $\lambda_{9367}=200.0359^2+1/4$ removed.}
\end{figure}

\begin{figure}
  \includegraphics{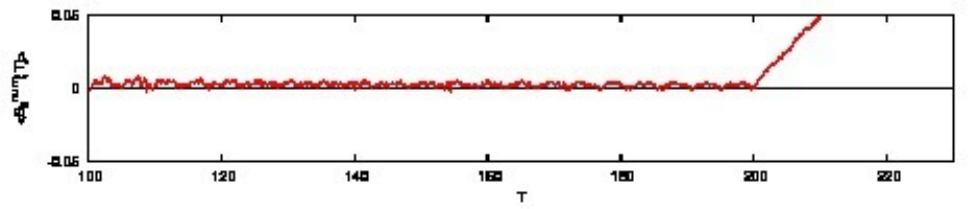}
  \caption{\label{S6m_fake}Mean $\langle S^{\text{num}}_6(T)\rangle$, with the
    fake ``eigenvalue'' $\lambda=200^2+1/4$ inserted.}
\end{figure}

Figures \ref{S5} and \ref{S6} show the fluctuations $S^{\text{num}}_N(T)$.
In figures \ref{S5m} and \ref{S6m}, the mean
\begin{align*}
  \langle S^{\text{num}}_N(T)\rangle
  :=\frac{1}{T}\int\limits_{0}^{T}S^{\text{num}}_N(t)dt
\end{align*}
tends to zero for large $T$ which indicates that all solutions have been found
numerically.

If a solution would have been overlooked, the graph would deviate from zero
quite significantly.
A demonstration is given in figure \ref{S6m_miss}, where we have intentionally
removed the eigenvalue $\lambda_{9367}=200.0359^2+1/4$, whereas in figure
\ref{S6m_fake}, we have intentionally inserted a fake ``eigenvalue'' at
$\lambda=200^2+1/4$.

If we would have an explicit and efficient upper bound on
$\int\limits_{0}^{T} S_N(t) dt$, we could apply Turing's
method to prove, not just verify, that the numerically found lists of
eigenvalues are consecutive.
The proof would be to add a fake ``eigenvalue'' near the end of each list of
eigenvalues and show that with this extra ``eigenvalue''
$\langle S^{\text{num}}_N(T)\rangle$ would exceed the upper bound, as explained
in section \ref{Turing}, see also \cite{Tur53,Boo06,Boo12}.
In our notation, what is needed is to explicitly evaluate the implied constant
in the average of $S_N(T)$.
The algorithm in \cite{FJK10} and \cite{JK12} does, in fact provide such a
bound, but the explicit value is somewhat large, hence impractical.

\begin{remark}\rm
  For computing $S^{\text{num}}_N(T)$ we need to evaluate $\M_N(T)$ which
  includes the term $G_N(T)$.
  Actually, we do not know the exact value of $G_N(T)$.
  According to the bound \eqref{G_N bound}, we can safely neglect $G_N(T)$ in
  the evaluation of $S^{\text{num}}_N(T)$ for $T$ large.

  By Theorem \ref{AverageWeylLaw}, $\M_N(T)$ includes terms which
  depend on $\alpha_N(j,T)$ and on $\arctan(\cdot)$.
  For evaluating the average $\langle S^{\text{num}}_N(T)\rangle$, we need to
  integrate over these terms.
  The sum of the $\alpha_N(j,T)$ and the $\arctan(\cdot)$ dependent terms is
  periodic.
  We perform the integration by expanding the periodic contribution into a
  Fourier series, integrate the individual Fourier terms, and then sum up
  numerically.
\end{remark}

\subsection{Nearest neighbour spacing statistics}\label{spacing}

Concerning the statistical properties of the eigenvalues, we must emphasize
that the conjectured properties depend on the choice of the surface $X$.
Depending on whether the corresponding classical system of a point particle
that moves freely on the surface is integrable or not, there are some
generally accepted conjectures about the nearest neighbour spacing
distributions of the eigenvalues in the limit $\lambda\to\infty$.

Whenever we examine the distribution of the eigenvalues we consider the values
on the scale of the mean level spacings.

\begin{conjecture}[\cite{BT76}]
  If the corresponding classical system is integrable, the eigenvalues
  behave like independent random variables and the distribution of the
  nearest neighbour spacings is in the limit $\lambda\to\infty$ close to a
  Poisson distribution, i.e.\ there is no level repulsion.
\end{conjecture}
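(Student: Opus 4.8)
A proof of this statement is not known --- it is the Berry--Tabor conjecture \cite{BT76} --- so what follows is the strategy one would pursue, together with an indication of why the surfaces $X_N$ of the present paper fall outside its hypotheses.

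The plan is to reduce the nearest-neighbour spacing statistics, as $\lambda\to\infty$, to a number-theoretic question about the fine-scale distribution of the values of a smooth function on a lattice, and then to establish Poisson behaviour there by the method of moments. First I would invoke the EBK (Einstein--Brillouin--Keller) torus quantization rule for an integrable Hamiltonian with $d$ degrees of freedom: in action--angle coordinates the semiclassical eigenvalues are, to leading order, $\lambda_{\mathbf n}\approx H\bigl(\hbar(\mathbf n+\boldsymbol\mu/4)\bigr)$ for $\mathbf n\in\Z_{\ge 0}^d$, where $H$ is the Hamiltonian expressed in the action variables and $\boldsymbol\mu$ the vector of Maslov indices. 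Rescaling by the mean level density, one is then led to study, as the energy tends to infinity, the fine-scale distribution of the values $H(\mathbf x)$ with $\mathbf x$ running over the lattice $\hbar\Z^d$; in the generic non-degenerate case one expects these normalized values to behave like a Poisson point process on $\R$.

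Second I would establish this Poisson behaviour through convergence of all $k$-level correlation functions to their constant Poisson limit, equivalently through convergence of the factorial moments on a rescaled unit interval to $1$. The pair-correlation case $k=2$ is the heart of the matter: Poisson summation on the torus $\R^d/\Z^d$ turns the pair-correlation sum into a sum over the dual lattice weighted by oscillatory integrals, and stationary-phase analysis isolates a diagonal main term equal to the Poisson value; one must then show that the off-diagonal, non-resonant terms are of strictly lower order. Higher correlations are handled by the same mechanism with more bookkeeping, which is what makes rigorous the phrase ``the eigenvalues behave like independent random variables''.

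The hard part --- and the reason this is still a conjecture --- is precisely the off-diagonal estimate. Without a Diophantine genericity hypothesis on $H$ the conclusion is false: the isotropic harmonic oscillator and rational quadratic forms produce arithmetic level clustering and strongly non-Poisson spacings. One must therefore (i) isolate the correct genericity condition on the Hessian of $H$ and on the frequency ratios, and (ii) bound, uniformly in $\lambda$, the contribution of near-resonant dual lattice vectors, which calls for quantitative equidistribution input --- effective ergodic theorems on spaces of lattices, or circle-method estimates for the relevant counting problems. Finally I would record the caveat most relevant here: the geodesic flow on $X_N$ is Anosov, hence \emph{not} integrable, so Berry--Tabor does not cover $X_5$ and $X_6$; for those surfaces the pertinent --- and equally conjectural --- mechanism is the exceptional multiplicity in the length spectrum forced by the Hecke operators, which is the content of the Arithmetic Quantum Chaos conjecture (Conjecture \ref{aqc}) and the reason for the numerical experiments of Section \ref{numerics}.
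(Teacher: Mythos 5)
You are right that this statement admits no proof in the paper or anywhere else: it is the Berry--Tabor conjecture, quoted from \cite{BT76} purely as background for the nearest-neighbour spacing discussion, and the paper itself offers no argument for it. Your sketch of the standard strategy (EBK quantization plus Poisson-summation analysis of the correlation functions) and your caveat that the geodesic flow on $X_N$ is chaotic---so that the relevant, equally unproven, statement for $X_5$ and $X_6$ is Conjecture \ref{aqc} rather than Berry--Tabor---accurately reflect how the paper uses this conjecture.
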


\begin{conjecture}[\cite{BGS86}]
  If the corresponding classical system is chaotic, the eigenvalues are
  distributed like the eigenvalues of hermitian random matrices.
  The corresponding ensembles depend only on the symmetries of the system:
  \begin{itemize}
  \item
    For chaotic systems without time-reversal invariance the distribution
    of the eigenvalues approaches in the limit $\lambda\to\infty$ the
    distribution of the Gaussian Unitary Ensemble (GUE) which is characterised
    by a quadratic level repulsion.
  \item
    For chaotic systems with time-reversal invariance and integer spin
    the distribution of the eigenvalues approaches in the limit
    $\lambda\to\infty$ the distribution of the Gaussian Orthogonal Ensemble
    (GOE) which is characterised by a linear level repulsion.
  \item
    For chaotic systems with time-reversal invariance and half-integer
    spin the distribution of the eigenvalues approaches in the limit
    $\lambda\to\infty$ the distribution of the Gaussian Symplectic Ensemble
    (GSE) which is characterised by a quartic level repulsion.
  \end{itemize}
\end{conjecture}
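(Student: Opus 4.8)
The statement is the Bohigas--Giannoni--Schmit conjecture, one of the central open problems of quantum chaos; no complete proof is known, so the honest aim of a proof proposal is to lay out the semiclassical strategy that comes closest and to isolate where rigor breaks down. The plan is to target not the individual eigenvalues but their local two-point statistics on the scale of the mean spacing, encoded in the spectral form factor $K(\tau)$, the Fourier transform of the unfolded pair correlation function. The three Wigner--Dyson ensembles are distinguished entirely by the small-$\tau$ behaviour of $K$: one has $K(\tau)\sim\tau$ for GUE, $K(\tau)\sim 2\tau$ for GOE, and $K(\tau)\sim\tfrac12\tau$ for GSE, with the full curves fixed by the respective random-matrix averages. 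Reducing the conjecture to reproducing these three universal form factors is the first step.

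Next I would convert $K(\tau)$ into a sum over closed geodesics. For a hyperbolic surface such as $X_N$ this is a genuine advantage over the general setting, because the Selberg trace formula is \emph{exact} rather than merely asymptotic: the heuristic Gutzwiller formula used for generic chaotic systems is here a theorem, so $K(\tau)$ becomes a bona fide double sum over pairs of closed geodesics weighted by their lengths. The leading contribution is Berry's diagonal approximation, in which each geodesic is paired either with itself or with its time-reverse. Evaluating it requires the Hannay--Ozorio de Almeida sum rule, i.e.\ the equidistribution and exponential proliferation (with topological entropy $1$) of closed geodesics, which for constant-curvature surfaces follows from the prime geodesic theorem. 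This yields $K(\tau)\sim g\tau$, where the multiplicity $g=2$ when time reversal (complex conjugation of Maass forms, an integer-spin symmetry) pairs each orbit with a distinct reversed partner, and $g=1$ when that symmetry is broken; this single factor already separates GUE from GOE/GSE at leading order.

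To obtain the full form factor, and in particular the curvature that encodes the characteristic level repulsion, I would implement the off-diagonal expansion of Sieber--Richter and its all-orders completion by M\"uller, Heusler, Braun, Haake, and Altland. Here one counts families of closed geodesics that differ only inside finitely many small \emph{encounters}, organizes them by a combinatorial bookkeeping of encounter structures, and shows that the resulting generating function reproduces, term by term in $\tau$, the perturbative expansion of the appropriate Wigner--Dyson $K(\tau)$. The GSE case is reached by inserting the spin (or metaplectic holonomy) phase that reverses the sign of the time-reversal contribution, which is precisely why half-integer spin produces quartic rather than linear repulsion.

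The main obstacle is rigor in exactly this off-diagonal sector: controlling uniformly the phase-space measure of each correlated orbit family, bounding the error terms, and --- most delicately --- justifying the interchange of the long-orbit (semiclassical) limit with the energy average that defines $K(\tau)$. For the \emph{arithmetic} surfaces $X_N$ considered here the difficulty is not merely technical but fatal to the conjecture as stated: the Hecke algebra forces exponentially large spectral multiplicities, and the arithmetic length spectrum makes geodesic lengths coincide far more often than genericity would permit, so the diagonal sum rule overcounts and the Berry slope is swamped. This is exactly the mechanism behind Poisson rather than Wigner--Dyson statistics and is the content of Conjecture \ref{aqc}. Consequently the statement one can realistically hope to prove is the conjecture restricted to \emph{non-arithmetic} chaotic systems, and the crux is to formulate and verify the genericity hypothesis that excludes these arithmetic degeneracies while keeping the encounter expansion valid; carrying this out rigorously remains open.
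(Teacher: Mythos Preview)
The paper does not prove this statement: it is recorded as a \emph{conjecture} (the Bohigas--Giannoni--Schmit conjecture, cited to \cite{BGS86}) and is immediately followed by the remark ``These conjectures are very well confirmed by numerical calculations, but several exceptions are known,'' together with two listed exceptions. There is therefore no proof in the paper to compare your proposal against.

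Your proposal correctly recognises this. You identify the statement as open, you outline the standard semiclassical programme (reduction to the form factor $K(\tau)$, the diagonal approximation via the prime geodesic theorem, and the Sieber--Richter/M\"uller--Heusler--Braun--Haake--Altland encounter expansion), and you accurately isolate where rigor fails. Most importantly, you note that the arithmetic surfaces $X_N$ under study in this paper are precisely the kind of counterexample that defeats the conjecture as stated: the exponential length-spectrum degeneracies lead to Poisson rather than Wigner--Dyson statistics, which is exactly the content of the paper's Conjecture~\ref{aqc} and the second listed Exception. So your proposal is not a proof --- nor could it be --- but it is an honest and well-informed account of the state of the art, and it dovetails with the paper's own framing of why the numerics on $\Gamma_0(5)^+$ and $\Gamma_0(6)^+$ show Poisson behaviour rather than GOE.
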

These conjectures are very well confirmed by numerical calculations,
but several exceptions are known.

\begin{exception}\rm
  The harmonic oscillator is classically integrable, but its spectrum
  is equidistant.
\end{exception}

\begin{exception}\rm
  The geodesic motion on surfaces with constant negative curvature
  provides a prime example for classical chaos.
  In some cases, however, the nearest neighbour distribution of the
  eigenvalues of the Laplacian on these surfaces appears to be Poissonian.
\end{exception}

With our lists of consecutive eigenvalues, we can examine the nearest
neighbour spacings.
We unfold the spectrum
\begin{align*}
  u_n=\M_N(r_n)\ \text{ with }\ \lambda_n=r_n^2+1/4,
\end{align*}
in order to obtain rescaled eigenvalues $u_n$ with a unit mean density.
Then
\begin{align*}
  s_n=u_{n+1}-u_n
\end{align*}
defines the sequence of nearest neighbour level spacings which has a mean
value of $1$ as $n\to\infty$.
For the moonshine groups $\Gamma_0(5)^+$ and $\Gamma_0(6)^+$ we find that
the spacing distributions come close to that of a Poisson random process,
\begin{align*}
  P_{\text{Poisson}}(s)=e^{-s},
\end{align*}
see figure \ref{P5&6}, as opposed to that of a Gaussian
orthogonal ensemble of random matrix theory,
\begin{align*}
  P_{\text{GOE}}(s)\simeq\frac{\pi}{2}se^{-\frac{\pi}{2}s^2}.
\end{align*}
The spacing distributions are in accordance with Conjecture \ref{aqc}.

\begin{figure}
  \includegraphics{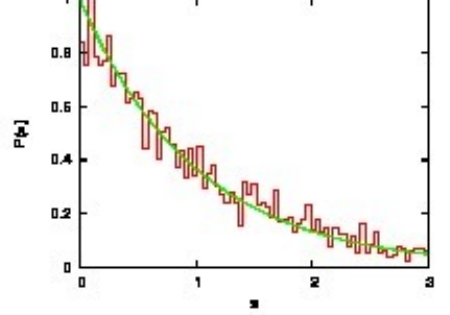} \hfill \includegraphics{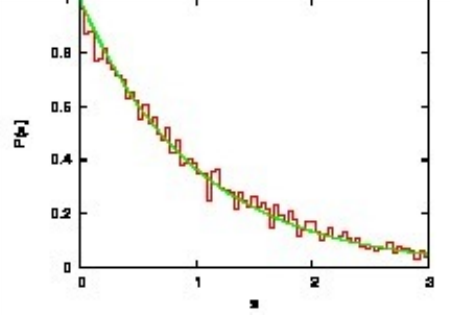}
  \caption{\label{P5&6}Nearest neighbour spacing distributions $P(s)$ for the
    moonshine groups $\Gamma_0(5)^+$ (left), and $\Gamma_0(6)^+$ (right),
    which come close the Poisson distribution $P_{\text{Poisson}}(s)=e^{-s}$.}
\end{figure}

One might wonder whether eigenvalue spacings are correlated.
For this we investigated joint eigenvalue spacing distributions,
\begin{align*}
  P(s,s')dsds'=P\big((s_n,s_{n+1})\in[s,s+ds)\times[s',s'+ds')\big).
\end{align*}
For $\Gamma_0(5)^+$ and $\Gamma_0(6)^+$, we find that the joint eigenvalue
spacing distributions factor into a product of Poisson distributions,
\begin{align*}
  P(s,s')=P_{\text{Poisson}}(s)P_{\text{Poisson}}(s'),
\end{align*}
see figures \ref{Pj5} and \ref{Pj6}.
Heuristically, the spacings between rescaled eigenvalues are uncorrelated
which implies that the eigenvalues are uncorrelated as well.

\begin{figure}
  \includegraphics{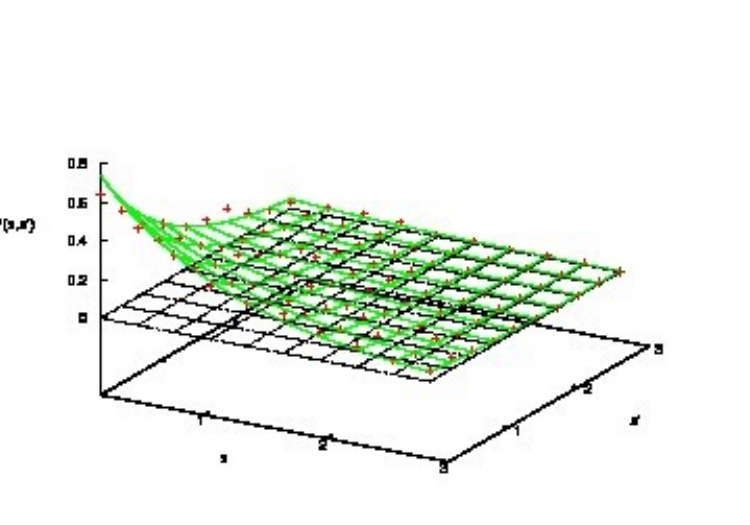}
  \caption{\label{Pj5}Joint nearest neighbour spacing distribution $P(s,s')$
    for $\Gamma_0(5)^+$ which comes close to a product of Poisson
    distributions $P_{\text{Poisson}}(s)P_{\text{Poisson}}(s')$.}
\end{figure}

\begin{figure}
  \includegraphics{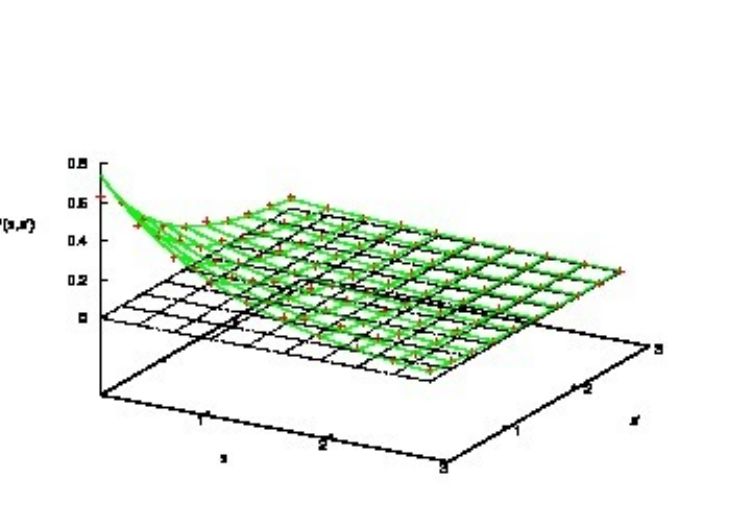}
  \caption{\label{Pj6}Joint nearest neighbour spacing distribution $P(s,s')$
    for $\Gamma_0(6)^+$ which comes close to the product of Poisson
    distributions $P_{\text{Poisson}}(s)P_{\text{Poisson}}(s')$.}
\end{figure}

\section{Concluding remarks}\label{conclusions}

\subsection{Topological equivalence versus Weyl's law}

The groups $\Gamma_0(5)^+$ and $\Gamma_0(6)^+$, which were the main focus of
investigation in our paper, are topologically equivalent, have different
Weyl's law, yet the two sets of eigenvalues seem to have the same spacing
distributions.

From the tables presented in \cite{Cum04}, one can find other examples of
such groups.
In the case when the genus $g$ is zero, we have the following examples of
topologically equivalent groups with different ``classical'' Weyl's laws and
``average'' Weyl's law.

\begin{enumerate}
\item
  For $N\in\{11,14,15\}$, the signature of the surface
  $\overline{\Gamma_0(N)^+}\backslash\h$ is $(0; 2,2,2,2;1)$,
\item
  For $N\in\{17,22,30\}$, the signature of the surface
  $\overline{\Gamma_0(N)^+}\backslash\h$ is $(0; 2,2,2,2,2;1)$,
\item
  For $N\in\{23,33,42\}$, the signature of the surface
  $\overline{\Gamma_0(N)^+}\backslash\h$ is $(0; 2,2,2,2,2,2;1)$,
\item
  For $N\in\{29,38\}$, the signature of the surface
  $\overline{\Gamma_0(N)^+}\backslash\h$ is $(0; 2,2,2,2,2,2,2;1)$,
\item
  For $N\in\{46,51,55,66,70\}$, the signature of the surface
  $\overline{\Gamma_0(N)^+}\backslash\h$ is $(0; 2,2,2,2,2,2,2,2;1)$.
\end{enumerate}

There are also examples of the groups with genus $g=1$, such as
$N\in\{83,123,143,182,195\}$ each of which has signature
$(1;2,2,2,2,2,2,2,2,2,2,2,2;1)$ and whose Weyl's law asymptotics differ in
the $T$ term.

This empirical investigation yields to an interesting question: For a given
positive integer $k$, is it possible to find $k$ topologically equivalent
surfaces arising from moonshine groups having different Weyl's laws?

\subsection{Weyl asymptotics versus nearest neighbour statistics}

Generally speaking, discrete eigenvalues of the Laplacian, or, equivalently,
positive imaginary parts of zeros of the corresponding Selberg zeta function
on the critical line, are increasing sequences of numbers, and the associated
Weyl's law is an approximate counting function of such sequences.
The results in section \ref{spacing} are related to numerical computation of
the nearest neighbour statistics of eigenvalues of Maass cusp forms on
$\overline{\Gamma_0(N)^+}\backslash\h$, for $N=5$ and $N=6$.
We have seen, empirically, that the nearest neighbour statistics for the
eigenvalues of Maass cusp forms seem to be each equal even though the Weyl's
laws are different.
One may argue that the reason for this is that the Weyl's law differs in the
$T$ term, while the first two lead terms are the same in the two cases we
considered.

Therefore, a natural question which arises is to what extent does the nearest
neighbour statistics of an increasing sequences of numbers depend on
its average counting function.
The answer to this question is presented in the following example.

\begin{example}\rm
  Let $\{x_n\}_{n\in \N}$ be an increasing sequence of numbers having
  a mean density of $1$, by which we mean
  \begin{align*}
    \lim_{T\to\infty}\frac{1}{T}\#\{x_n\le T\}=1.
  \end{align*}
  Let $m(t)$ be an increasing function, defined for $t>0$, such that
  $m(0)\ge1/2$.
  (In the Weyl's law case, $m(t)=a_0t^2+a_1t\log t+a_2t+a_3+\ldots$, for some
  positive number $a_0$.)
  Let us define a sequence of numbers $\lambda_n$ by letting
  $\lambda_n:=m^{-1}(x_n-\frac{1}{2})$, where $m^{-1}$ denotes the inverse
  function of $m$.
  Let $\mathcal{N}(t)$ be the counting function
  \begin{align*}
    \mathcal{N}(t):=\#\{\lambda_n\le t\},
  \end{align*}
  and let the Weyl asymptotics $\M(t)$ be a smooth approximation to
  $\mathcal{N}(t)$ such that
  \begin{align*}\label{Weyl}
    \lim_{T\to\infty}\frac{1}{T}\int\limits_0^T(\mathcal{N}(t)-\M(t))dt
    =0.
  \end{align*}
  The unfolded spectrum $\{u_n\}$ is defined by $u_n:=m(\lambda_n)$.
  Trivially, $ u_n=m(\lambda_n)=m(m^{-1}(x_n-\frac{1}{2}))=x_n-\frac{1}{2}$,
  for all $n\in \N$ hence $u_{n+1}-u_n=x_{n+1}-x_n$, so the nearest
  neighbour statistics of the unfolded spectrum $\{u_n\}$ equals the nearest
  neighbour statistics of the initial sequence $\{x_n\}$.

  We are free to distribute the sequence of increasing numbers $\{x_n\}$ such
  that the nearest neighbour statistics of $\{x_n\}$ coincides with our
  favorite distribution of non-negative numbers.
  We are also free to choose the smooth increasing function $m(t)$, and hence
  the Weyl asymptotics arbitrarily.
  Since $\{x_n\}$ and $m(t)$ can be chosen independently of each other, we
  conclude that the nearest neighbour statistics of the unfolded spectrum
  $\{u_n\}$ is completely independent of the Weyl asymptotics.
\end{example}

Therefore, all the analytic results on the Weyl asymptotics are completely
independent of the numerical results on the nearest neighbour statistics.
Neither carries any information of the other, regardless of how many expansion
terms we include in the Weyl asymptotics.
Analytics and numerics complement each other.

\end{document}